\theoremstyle{plain}
\newtheorem{theorem}{Theorem}[section]
\newtheorem{lemma}[theorem]{Lemma}
\newtheorem{proposition}[theorem]{Proposition}
\theoremstyle{definition}
\newtheorem{definition}[theorem]{Definition}
\newtheorem{example}[theorem]{Example}
\newtheorem{remark}[theorem]{Remark}
\numberwithin{equation}{section}
\newcommand{\R}{{\mathbb R}}
\newcommand{\N}{{\mathbb N}}
\providecommand{\vint}[1]{\mathchoice
          {\mathop{\vrule width 5pt height 3 pt depth -2.5pt
                  \kern -9pt \kern 1pt\intop}\nolimits_{\kern -5pt{#1}}}
          {\mathop{\vrule width 5pt height 3 pt depth -2.6pt
                  \kern -6pt \intop}\nolimits_{\kern -3pt{#1}}}
          {\mathop{\vrule width 5pt height 3 pt depth -2.6pt
                  \kern -6pt \intop}\nolimits_{\kern -3pt{#1}}}
          {\mathop{\vrule width 5pt height 3 pt depth -2.6pt
                  \kern -6pt \intop}\nolimits_{\kern -3pt{#1}}}}
\newcommand{\eps}{\varepsilon}
\newcommand{\loc}{{\mbox{\scriptsize{loc}}}}
\newcommand{\BV}{\mathrm{BV}}
\newcommand{\liploc}{\mathrm{Lip}_{\mathrm{loc}}}
\DeclareMathOperator{\dist}{dist}
\DeclareMathOperator{\diam}{diam}
\DeclareMathOperator{\Lip}{Lip}
\DeclareMathOperator{\supp}{supp}
\def\blfootnote{\xdef\@thefnmark{}\@footnotetext}
\begin{document}
\title{Extensions and traces of functions of bounded variation on metric spaces
}
\author{Panu Lahti}
\maketitle
\begin{abstract}
In the setting of a metric space equipped with a doubling measure and supporting a Poincar\'e inequality, and based on results by Bj\"orn and Shanmugalingam (2007, \cite{BS}), we show that functions of bounded variation can be extended from any bounded uniform domain to the whole space. Closely related to extensions is the concept of boundary traces, which have previously been studied by Hakkarainen et al. (2014, \cite{HKLL}). On spaces that satisfy a suitable locality condition for sets of finite perimeter, we establish some basic results for the traces of functions of bounded variation. Our analysis of traces also produces novel results on the behavior of functions of bounded variation in their jump sets.
\end{abstract}

\blfootnote{
2010 Mathematics Subject Classification. 30L99, 26B30, 46E35.

{\it Keywords\,}: boundary trace, bounded variation, extension, jump set, locality, uniform domain.
}

\section{Introduction}

A classical Euclidean result on the extension of Sobolev functions and functions of bounded variation, abbreviated as $\BV$ functions, is that any bounded domain with a Lipschitz boundary allows such extensions, see e.g. \cite[Proposition 3.21]{AFP}. For Sobolev functions, this result was generalized to so-called $(\eps,\delta)$-domains by Jones \cite{J}. On metric spaces, extension results for various classes of functions, such as Haj\l{}asz-Sobolev functions and H\"older continuous functions, have been derived in e.g. \cite{HKT} and \cite{BBS}. One result on the extension of $\BV$ functions on metric spaces is given by Baldi and Montefalcone \cite{BM} who show, in essence, that if sets of finite perimeter can be extended from a domain, then so can general $\BV$ functions. However, a simple geometric condition ensuring the extendability of $\BV$ functions appears to be missing.

Bj\"orn and Shanmugalingam show in \cite{BS} that every \emph{uniform domain} $\Omega$ is an extension domain for Newton-Sobolev functions $N^{1,p}(\Omega)$, with $p\ge 1$. On the other hand, $\BV$ functions on metric spaces are defined by relaxation with Newton-Sobolev functions, see \cite{Amb} and \cite{Mir}. Thus the extension result of \cite{BS} can be applied to the class $\BV$ in a fairly straightforward manner, as presented in this note.

On the other hand, the concepts of extensions and \emph{boundary traces} are closely related. Classical treatments of boundary traces of $\BV$ functions can be found in e.g. \cite[Chapter 3]{AFP} and \cite[Chapter 2]{Gi}, and a standard assumption is again a Lipschitz boundary. On the other hand, in the metric setting, results on boundary traces seem to be largely absent, with the exception of \cite{HKLL}, where boundary traces of $\BV$ functions were defined on the boundaries of certain $\BV$ extension domains.

In this note, we present a different approach to traces, where more is assumed of the space but less of the domain. More precisely, we assume a certain locality condition that essentially states that any two sets of finite perimeter ``look the same'' near almost every point in which their measure theoretic boundaries intersect. Then we can prove the existence of \emph{interior traces} of $\BV$ functions on the measure theoretic boundary of any set of finite perimeter, and also prove the existence of boundary traces on the measure theoretic boundary of any extension domain.

In \cite{KKST}, pointwise properties of $\BV$ functions on metric spaces were studied, and in particular a Lebesgue point theorem for $\BV$ functions outside their jump sets was given. Since the super-level sets of a $\BV$ function are sets of finite perimeter, we are able to apply our analysis of traces to prove novel results on the behavior of a $\BV$ function in its jump set, extending classical results to metric spaces and strengthening results found in \cite{KKST}.

\section{Preliminaries}

In this section we introduce the necessary definitions and assumptions.

In this paper, $(X,d,\mu)$ is a complete metric space equipped
with a Borel regular outer measure $\mu$.
The measure is assumed to be doubling, meaning that there exists a constant $c_d>0$ such that
\[
0<\mu(B(x,2r))\leq c_d\mu(B(x,r))<\infty
\]
for every ball $B=B(x,r)$ with center $x\in X$ and radius $r>0$. This implies that
\begin{equation}\label{eq:definition of Q}
\frac{\mu(B(y,r))}{\mu(B(x,R))}\ge C\left(\frac{r}{R}\right)^Q
\end{equation}
for every $0<r\le R$ and $y\in B(x,R)$, and some $Q>1$ and $C>0$ that only depend on $c_d$.
In general, $C$ will denote a positive constant whose value is not
necessarily the same at each occurrence.

We recall that a complete metric space endowed with a doubling measure is proper,
that is, closed and bounded sets are compact. Since $X$ is proper, for any open set $\Omega\subset X$
we define e.g. $\textrm{Lip}_{\loc}(\Omega)$ as the space of
functions that are Lipschitz in every $\Omega'\Subset\Omega$.
Here $\Omega'\Subset\Omega$ means that $\Omega'$ is open and that $\overline{\Omega'}$ is a
compact subset of $\Omega$.


For any set $A\subset X$ and $0<R<\infty$, the restricted spherical Hausdorff content
of codimension $1$ is defined as
\[
\mathcal{H}_{R}(A):=\inf\left\{ \sum_{i=1}^{\infty}\frac{\mu(B(x_{i},r_{i}))}{r_{i}}:\,A\subset\bigcup_{i=1}^{\infty}B(x_{i},r_{i}),\,r_{i}\le R\right\}.
\]
The Hausdorff measure of codimension $1$ of a set
$A\subset X$ is
\[
\mathcal{H}(A):=\lim_{R\rightarrow0}\mathcal{H}_{R}(A).
\]

The (topological) boundary $\partial E$ of a set $E\subset X$ is
defined as usual. The measure theoretic boundary $\partial^{*}E$ is defined as the set of points $x\in X$
in which both $E$ and its complement have positive upper density, i.e.
\[
\limsup_{r\rightarrow0}\frac{\mu(B(x,r)\cap E)}{\mu(B(x,r))}>0\quad\;\textrm{and}\quad\;\limsup_{r\rightarrow0}\frac{\mu(B(x,r)\setminus E)}{\mu(B(x,r))}>0.
\]

A curve is a rectifiable continuous mapping from a compact interval
to $X$, and is usually denoted by the symbol $\gamma$.
The length of a curve $\gamma$
is denoted by $\ell_{\gamma}$. We will assume every curve to be parametrized
by arc-length, which can always be done (see e.g. \cite[Theorem 3.2]{Hj}).

A nonnegative Borel function $g$ on $X$ is an upper gradient 
of an extended real-valued function $u$
on $X$ if for all curves $\gamma$ on $X$, we have
\[
|u(x)-u(y)|\le \int_\gamma g\,ds
\]
whenever both $u(x)$ and $u(y)$ are finite, and 
$\int_\gamma g\, ds=\infty $ otherwise.
Here $x$ and $y$ are the end points of $\gamma$.

We consider the following norm
\[
\Vert u\Vert_{N^{1,1}(X)}:=\Vert u\Vert_{L^1(X)}+\inf_g\Vert g\Vert_{L^1(X)},
\]
with the infimum taken over all upper gradients $g$ of $u$. 
The Newton-Sobolev, or Newtonian space is defined as
\[
N^{1,1}(X):=\{u:\|u\|_{N^{1,1}(X)}<\infty\}/{\sim},
\]
where the equivalence relation $\sim$ is given by $u\sim v$ if and only if 
\[
\Vert u-v\Vert_{N^{1,1}(X)}=0.
\]
Similarly, we can define $N^{1,1}(\Omega)$ for an open set $\Omega\subset X$. For more on Newtonian spaces, we refer to \cite{BB}.

Next we recall the definition and basic properties of functions
of bounded variation on metric spaces, see \cite{Mir}. 
For $u\in L^1_{\text{loc}}(X)$, we define the total variation of $u$ as
\[
\|Du\|(X):=\inf\Big\{\liminf_{i\to\infty}\int_X g_{u_i}\,d\mu:\, u_i\in \Lip_{\loc}(X),\, u_i\to u\textrm{ in } L^1_{\text{loc}}(X)\Big\},
\]
where $g_{u_i}$ is an upper gradient of $u_i$.
We say that a function $u\in L^1(X)$ is of bounded variation, 
and denote $u\in\BV(X)$, if $\|Du\|(X)<\infty$. 
Moreover, a $\mu$-measurable set $E\subset X$ is said to be of finite perimeter if $\|D\chi_E\|(X)<\infty$.
By replacing $X$ with an open set $\Omega\subset X$ in the definition of the total variation, we can define $\|Du\|(\Omega)$.
The $\BV$ norm is given by
\[
\Vert u\Vert_{\BV(\Omega)}:=\Vert u\Vert_{L^1(\Omega)}+\Vert Du\Vert(\Omega).
\]
For an arbitrary set $A\subset X$, we define
\[
\|Du\|(A):=\inf\bigl\{\|Du\|(\Omega):\,\Omega\supset A,\,\Omega\subset X
\text{ is open}\bigr\}.
\]
If $u\in\BV(\Omega)$, $\|Du\|(\cdot)$ Radon measure of finite mass on $\Omega$ \cite[Theorem 3.4]{Mir}.

We also denote the perimeter of $E$ in $\Omega$ by
\[
P(E,\Omega):=\|D\chi_E\|(\Omega).
\]
We have the following coarea formula given by Miranda in \cite[Proposition 4.2]{Mir}: if $F\subset X$ is a Borel set and $u\in \BV(X)$, we have
\begin{equation}\label{eq:coarea}
\|Du\|(F)=\int_{-\infty}^{\infty}P(\{u>t\},F)\,dt.
\end{equation}

We always assume that $X$ supports a $(1,1)$-Poincar\'e inequality,
meaning that for some constants $c_P>0$ and $\lambda \ge 1$, for every
ball $B(x,r)$, for every locally integrable function $u$,
and for every upper gradient $g$ of $u$, we have 
\[
\vint{B(x,r)}|u-u_{B(x,r)}|\, d\mu 
\le c_P r\,\vint{B(x,\lambda r)}g\,d\mu,
\]
where 
\[
u_{B(x,r)}:=\vint{B(x,r)}u\,d\mu :=\frac 1{\mu(B(x,r))}\int_{B(x,r)}u\,d\mu.
\]
The $(1,1)$-Poincar\'e inequality implies the so-called Sobolev-Poincar\'e inequality, see e.g. \cite[Theorem 4.21]{BB}, and by approximation with get the following Sobolev-Poincar\'e inequality for $\BV$ functions. There exists $C>0$, depending only on the doubling constant and the constants in the Poincar\'e inequality, such that for every ball $B(x,r)$ and every $u\in L^1_{\loc}(X)$, we have
\[
\left(\,\vint{B(x,r)}|u-u_{B(x,r)}|^{Q/(Q-1)}\,d\mu\right)^{(Q-1)/Q}
\le Cr\frac{\Vert Du\Vert (B(x,2\lambda r))}{\mu(B(x,2\lambda r))}.
\]
Recall the definition of the number $Q$ from \eqref{eq:definition of Q}. Moreover, for functions $u\in L^1_{\loc}(X)$ with approximate limit $0$ at $x$, i.e. $u^{\wedge}(x)=u^{\vee}(x)=0$, we have
\begin{equation}\label{eq:asymptotic poincare}
\limsup_{r\to 0}\left(\,\vint{B(x,r)}|u|^{Q/(Q-1)}\,d\mu\right)^{(Q-1)/Q}
\le C\limsup_{r\to 0}r\frac{\Vert Du\Vert (B(x,2\lambda r))}{\mu(B(x,2\lambda r))},
\end{equation}
see \cite[Lemma 3.1]{KKST}.

Given a set of locally finite perimeter $E\subset X$, for $\mathcal H$-a.e. $x\in \partial^*E$ we have
\begin{equation}\label{eq:density of E}
\gamma \le \liminf_{r\to 0} \frac{\mu(E\cap B(x,r))}{\mu(B(x,r))} \le \limsup_{r\to 0} \frac{\mu(E\cap B(x,r))}{\mu(B(x,r))}\le 1-\gamma,
\end{equation}
where $\gamma \in (0,1/2]$ only depends on the doubling constant and the constants in the Poincar\'e inequality \cite[Theorem 5.4]{Amb}.
For a Borel set $F\subset X$ and a set of finite perimeter $E\subset X$, we know that
\begin{equation}\label{eq:def of theta}
\Vert D\chi_{E}\Vert(F)=\int_{\partial^{*}E\cap F}\theta_E\,d\mathcal H,
\end{equation}
where $\theta_E:X\mapsto [\alpha,c_d]$, with $\alpha=\alpha(c_d,c_P,\lambda)>0$, see \cite[Theorem 5.3]{Amb} and \cite[Theorem 4.6]{AMP}.

The jump set of $u\in\BV(X)$ is defined as
\[
S_{u}:=\{x\in X:\, u^{\wedge}(x)<u^{\vee}(x)\},
\]
where $u^{\wedge}$ and $u^{\vee}$ are the lower and upper approximate limits of $u$ defined as
\[
u^{\wedge}(x):
=\sup\left\{t\in\overline\R:\,\lim_{r\to0}\frac{\mu(B(x,r)\cap\{u<t\})}{\mu(B(x,r))}=0\right\}
\]
and
\[
u^{\vee}(x):
=\inf\left\{t\in\overline\R:\,\lim_{r\to0}\frac{\mu(B(x,r)\cap\{u>t\})}{\mu(B(x,r))}=0\right\}.
\]
Outside the jump set, i.e. in $X\setminus S_u$, $\mathcal H$-almost every point is a Lebesgue point of $u$ \cite[Theorem 3.5]{KKST}, and we denote the Lebesgue limit at $x$ by $\widetilde{u}(x)$.

The following decomposition result holds for the variation measure of a $\BV$ function. Given an open set $\Omega^*\subset X$, a function $u\in\BV(\Omega^*)$, and a Borel set $A\subset \Omega^*$ that is $\sigma$-finite with respect to $\mathcal H$, we have
\begin{equation}\label{eq:decomposition}
\Vert Du\Vert(\Omega^*)=\Vert Du\Vert(\Omega^*\setminus A)+\int_A\int_{u^{\wedge}(x)}^{u^{\vee}(x)}\theta_{\{u>t\}}(x)\,dt\,d\mathcal H(x),
\end{equation}
see \cite[Theorem 5.3]{AMP}.

A domain $\Omega\subset X$ is said to be $A$-uniform, with constant $A\ge 1$, if for every $x,y\in\Omega$ there exists a curve $\gamma$ in $\Omega$ connecting $x$ and $y$ such that $\ell_{\gamma}\le Ad(x,y)$, and for all $t\in [0,\ell_{\gamma}]$, we have
\[
\dist(\gamma(t),X\setminus\Omega)\ge A^{-1}\min\{t,\ell_{\gamma}-t\}.
\]
We say that a $\mu$-measurable set $\Omega$ satisfies the weak measure density condition if for $\mathcal H$-a.e. $x\in\partial\Omega$,
\begin{equation}\label{eq:weak measure density condition}
\liminf_{r\to 0}\frac{\mu(B(x,r)\cap\Omega)}{\mu(B(x,r))}>0.
\end{equation}
In particular, this is true for any uniform domain \cite{BS}.

\section{The extension result}

In this section we present the extension result for $\BV$ functions.
For any $t>0$ and any set $\Omega\subset X$, we define
\[
\Omega^t:=\{x\in X:\,\dist(x,\Omega)<t\}
\]
and
\[
\Omega_t:=\{x\in \Omega:\,\dist(x,X\setminus \Omega)>t\}.
\]
\begin{theorem}\label{thm:extension theorem}
Let $A\ge 1$, let $\Omega\subset X$ be a bounded $A$-uniform domain, and let $u\in\BV(\Omega)$. Let $T\in (0,\diam(\Omega))$. Then there is an extension $Eu\in\BV(X)$ such that $Eu|_{\Omega}=u$, $\supp(Eu)\subset \Omega^T$,
\[
\Vert Eu\Vert_{\BV(X)}\le C\Vert u\Vert_{\BV(\Omega)},
\]
and $\Vert D(Eu)\Vert(\partial \Omega)=0$. The constant $C$ depends only on the doubling constant, the constants in the Poincar\'e inequality, $A$, and $T$.
\end{theorem}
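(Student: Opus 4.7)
The plan is to combine the Björn--Shanmugalingam extension theorem for Newton--Sobolev functions on uniform domains \cite{BS} with the relaxation-based definition of $\BV$. To begin, by the definition of $\|Du\|(\Omega)$ pick a sequence $u_i \in \Lip_{\loc}(\Omega)$ with $u_i \to u$ in $L^1(\Omega)$ and $\int_\Omega g_{u_i}\,d\mu \to \|Du\|(\Omega)$. A standard truncation/mollification argument lets me arrange $u_i \in N^{1,1}(\Omega)$ with $\|u_i\|_{N^{1,1}(\Omega)} \to \|u\|_{\BV(\Omega)}$, so that the Björn--Shanmugalingam theorem applies and delivers extensions $\widetilde u_i \in N^{1,1}(X)$ with $\widetilde u_i|_\Omega = u_i$ and $\|\widetilde u_i\|_{N^{1,1}(X)} \le C(A)\,\|u_i\|_{N^{1,1}(\Omega)}$.

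Next, to enforce the support constraint I multiply by a Lipschitz cutoff $\eta$ that is $\equiv 1$ on $\Omega$, $\equiv 0$ outside $\Omega^T$, and satisfies $\Lip \eta \le 2/T$, setting $v_i := \eta\,\widetilde u_i$. The product rule for upper gradients gives $v_i \in N^{1,1}(X)$ with $\supp v_i \subset \overline{\Omega^T}$ and $\|v_i\|_{N^{1,1}(X)} \le C(A,T)\,\|u_i\|_{N^{1,1}(\Omega)}$. Since the $v_i$ are uniformly bounded in $N^{1,1}(X) \subset \BV(X)$ with supports in the fixed compact set $\overline{\Omega^T}$, the metric $\BV$ compactness theorem (available from doubling and the $(1,1)$-Poincar\'e inequality) produces an $L^1(X)$-convergent subsequence $v_i \to Eu$. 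Lower semicontinuity of the variation then yields
\[
\|D(Eu)\|(X) \le \liminf_i \int_X g_{v_i}\,d\mu \le C(A,T)\,\|u\|_{\BV(\Omega)},
\]
and clearly $Eu|_\Omega = u$ (from $L^1(\Omega)$-convergence $v_i|_\Omega = u_i \to u$) and $\supp Eu \subset \overline{\Omega^T}$.

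The main obstacle is the assertion $\|D(Eu)\|(\partial\Omega) = 0$. Each $v_i \in N^{1,1}(X)$ has a variation measure that is absolutely continuous with respect to $\mu$, but an $L^1$-limit of such functions could in principle acquire a singular part on $\partial\Omega$. To rule this out, I would combine two ingredients: the decomposition \eqref{eq:decomposition} applied to the set $\partial\Omega$, which says that (on an $\mathcal H$-$\sigma$-finite set) the variation is captured by the jump term, together with the specific Whitney-type reflection structure of the Björn--Shanmugalingam extension, which forces the outer trace of $\widetilde u_i$ at $\mathcal H$-a.e.\ boundary point to agree with the inner trace of $u_i$. Passing to the $L^1$ limit and combining with the weak measure density condition \eqref{eq:weak measure density condition} for uniform domains should yield $Eu^{\wedge}(x) = Eu^{\vee}(x)$ at $\mathcal H$-a.e.\ $x \in \partial\Omega$, so $\partial\Omega$ meets the jump set of $Eu$ only in an $\mathcal H$-null set and the decomposition forces $\|D(Eu)\|(\partial\Omega) = 0$.
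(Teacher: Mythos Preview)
Your overall scaffolding---approximate $u$ by Lipschitz functions, extend each via Bj\"orn--Shanmugalingam, cut off, pass to a limit---matches the paper's, and you correctly identify $\Vert D(Eu)\Vert(\partial\Omega)=0$ as the crux. But your proposed argument for that step has a genuine gap.

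First, the decomposition \eqref{eq:decomposition} applies only to sets $A$ that are $\sigma$-finite with respect to $\mathcal H$. Nothing in the hypotheses guarantees this for the full topological boundary $\partial\Omega$ of a uniform domain; you would at best have it for $\partial^*\Omega$ if you knew $\Omega$ had locally finite perimeter, which is not assumed. Second, and more seriously, your plan to ``pass to the $L^1$ limit'' from the trace agreement of the $v_i$ to trace agreement of $Eu$ does not work: boundary traces (equivalently, the approximate limits $u^{\wedge},u^{\vee}$) are not continuous under mere $L^1$ convergence. An $L^1$-limit of Newtonian functions can certainly develop a jump set that meets $\partial\Omega$ in positive $\mathcal H$-measure, so the argument as sketched cannot close.

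The paper avoids all of this by extracting from the Bj\"orn--Shanmugalingam construction a \emph{quantitative shell estimate}: for every $t\in(0,T)$,
\[
\int_{\Omega^t\setminus\Omega} g_{Ev}\,d\mu \;\le\; C\int_{\Omega\setminus\Omega_{\alpha t}} g_v\,d\mu \;+\; \frac{C}{T}\int_{\Omega\setminus\Omega_{\alpha t}}|v|\,d\mu
\]
(together with an analogous $L^1$ estimate). Applying this to the approximants and using lower semicontinuity gives
\[
\Vert D(Eu)\Vert(\partial\Omega)\;\le\;\Vert D(Eu)\Vert\bigl(\Omega^t\setminus\overline{\Omega_{\alpha t}}\bigr)\;\le\;C\Vert Du\Vert(\Omega\setminus\Omega_{\alpha t})+C\int_{\Omega\setminus\Omega_{\alpha t}}|u|\,d\mu,
\]
and the right-hand side tends to $0$ as $t\to 0$ because $\Vert Du\Vert$ and $\mu$ are finite measures on $\Omega$. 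This is purely a measure-theoretic squeeze and never invokes traces, jump sets, or $\mathcal H$-$\sigma$-finiteness of $\partial\Omega$. A secondary difference: the paper defines $Eu$ directly via the $L^1$-bounded operator $E$ and shows $Eu_i\to Eu$ in $L^1(X)$ by applying the $L^1$ shell estimate to $u_i-u$, so no compactness/subsequence argument is needed.
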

\begin{remark}\label{rem:uniformity consequences}
According to \cite[Theorem 5.6]{BS}, on which the above result is based, we can in fact replace the uniformity assumption by the following assumptions: $\mu(\partial\Omega)=0$, Lipschitz functions are dense in $N^{1,1}(\Omega)$, $\overline{\Omega}$ satisfies a \emph{corkscrew condition}, and the weighted measure $\dist(x,X\setminus \Omega)^{\alpha}\,d\mu(x)$ with some $\alpha>0$ supports a $(1,1)$-Poincar\'e inequality on $\overline{\Omega}$. For the precise definitions of these concepts, see \cite{BS}. In particular, all of these conditions follow from the uniformity assumption.
\end{remark}
\begin{proof}
In the proof of \cite[Theorem 5.6]{BS}, the authors first construct an extension operator $F:L^1(\Omega)\to L^1(X)$ that maps $\Lip(\Omega)$ to $\Lip(\Omega^T)$. An analysis of the proof reveals that $F$ satisfies for any $t\in (0,T)$
\[
\int_{\Omega^t\setminus\Omega}|Fv|\,d\mu\le C\int_{\Omega\setminus\Omega_{\alpha t}}|v|\,d\mu\quad\textrm{and}\quad\int_{\Omega^t\setminus\Omega}g_{Fv}\,d\mu\le C\int_{\Omega\setminus\Omega_{\alpha t}}g_v\,d\mu,
\] 
where the first inequality holds for any $v\in L^1(\Omega)$, and the second for any $v\in \Lip(\Omega)$, and the constants $C,\alpha>0$ depend only on $c_d$, $c_P$, $\lambda$, $A$, and $T$. Moreover, $g_{Fv}$ and $g_v$ are upper gradients of $Fv$ and $v$. Then one can define a cutoff function
\[
\eta(x):=\max\{0,\min\{1, 2-\dist(x,\Omega)/(T/4)\}\}
\]
and set $Ev:=\eta Fv$. Now, if $v\in\Lip(\Omega)$, then $g_{Ev}:=g_{Fv}\eta+g_{\eta} |Fv|$ is an upper gradient of $Ev$ according to the Leibniz rule (see \cite[Theorem 2.15]{BB}), and we get the estimates for any $t\in (0,T)$:
\begin{equation}\label{eq:Lip estimates L1}
\int_{\Omega^t\setminus\Omega}|Ev|\,d\mu\le C\int_{\Omega\setminus\Omega_{\alpha t}}|v|\,d\mu\qquad\textrm{for }v\in L^1(\Omega),
\end{equation}
and
\begin{equation}\label{eq:Lip estimates gradient}
 \int_{\Omega^t\setminus\Omega}g_{Ev}\,d\mu\le C\int_{\Omega\setminus\Omega_{\alpha t}}g_v\,d\mu+\frac{C}{T} \int_{\Omega\setminus\Omega_{\alpha t}}|v|\,d\mu\qquad\textrm{for }v\in\Lip(\Omega).
\end{equation}
Of course, in the second inequality the number $T$ can then be absorbed into the constant $C$. Now, take a sequence $u_i\in\liploc(\Omega)$ such that $u_i\to u$ in $L_{\loc}^1(\Omega)$ and
\begin{equation}\label{eq:minimizing sequence BV}
\int_{\Omega}g_{u_i}\,d\mu\to \Vert Du\Vert(\Omega)
\end{equation}
as $i\to\infty$. If $u$ is bounded, we can truncate the functions $u_i$, if necessary, to obtain $u_i\to u$ in $L^1(\Omega)$. If $u$ is unbounded, for the truncated functions we have
\[
u^k:=\min\{k,\max\{-k,u\}\}\to u \ \ \textrm{in}\ L^1(\Omega)\ \  \textrm{as}\ k\to\infty,
\]
and $\Vert Du^k\Vert (\Omega)\to \Vert Du\Vert(\Omega)$ by the lower semicontinuity of the variation measure. Thus in any case, we can assume that $u_i\to u$ in $L^1(\Omega)$. As noted in Remark \ref{rem:uniformity consequences}, Lipschitz functions are dense in $N^{1,1}(\Omega)$. Thus we can also assume that $u_i\in \Lip(\Omega)$ for all $i\in\N$. We can extend each function $u_i$ to $Eu_i\in\Lip_c(\Omega^T)$, such that by \eqref{eq:Lip estimates gradient}, we have
\[
\int_{\Omega^T} g_{Eu_i}\,d\mu\le C\int_{\Omega}g_{u_i}\,d\mu+C\int_{\Omega}|u_i|\,d\mu
\]
for every $i\in\N$. Then
\[
\liminf_{i\to\infty}\int_{\Omega^T}g_{Eu_i}\,d\mu\le C\Vert u\Vert_{\BV(\Omega)}.
\]
We can also extend $u$ to $Eu\in L^1(X)$, and by applying \eqref{eq:Lip estimates L1} to $|Eu_i-Eu|$, we get
\[
\int_{X}|Eu_i-Eu|\,d\mu\le C\int_{\Omega}|u_i-u|\,d\mu\to 0\quad\ \textrm{as }i\to\infty.
\]
Thus we have by \eqref{eq:Lip estimates L1} and by the definition of the variation measure,
\[
\int_{X}|Eu|\,d\mu\le C\int_{\Omega}|u|\,d\mu\quad\textrm{and}\quad \Vert D(Eu)\Vert(X)\le C\Vert u\Vert_{\BV(\Omega)}.
\]
This shows that $u\in\BV(X)$ and $\Vert Eu\Vert_{\BV(X)}\le C\Vert u\Vert_{\BV(\Omega)}$, with $C=C(c_d,c_P,\lambda,A,T)$.

Finally let us show that $\Vert D(Eu)\Vert(\partial\Omega)=0$. By \eqref{eq:Lip estimates gradient}, we get for the sequence $u_i\in\Lip(\Omega)$ and for any $t\in (0,T)$ that
\begin{align*}
\limsup_{i\to\infty}\int_{\Omega^t\setminus\overline{\Omega_{\alpha t}}}g_{Eu_i}\,d\mu&\le C\limsup_{i\to\infty}\int_{\Omega\setminus\Omega_{\alpha t}}g_{u_i}\,d\mu+C\limsup_{i\to\infty}\int_{\Omega\setminus\Omega_{\alpha t}}|u_i|\,d\mu\\
&\le C\Vert Du\Vert(\Omega\setminus\Omega_{\alpha t})+C\int_{\Omega\setminus\Omega_{\alpha t}}|u|\,d\mu.
\end{align*}
The last inequality follows from the definition of the variation measure, since we have $u_i\to u$ in $L^1(\Omega)$ and \eqref{eq:minimizing sequence BV}. By using this definition again, and recalling that $Eu_i\to Eu$ in $L^1(X)$, we get
\[
\Vert D(Eu)\Vert(\partial\Omega)\le \Vert D(Eu)\Vert(\Omega^t\setminus\overline{\Omega_{\alpha t}})\le \Vert Du\Vert(\Omega\setminus\Omega_{\alpha t})+C\int_{\Omega\setminus\Omega_{\alpha t}}|u|\,d\mu.
\]
By letting $t\to 0$, we get $\Vert D(Eu)\Vert(\partial\Omega)=0$.
\end{proof}

We give the following definition.

\begin{definition}\label{def:BV extension domain}
An open set $\Omega\subset X$ that satisfies the conclusions of Theorem \ref{thm:extension theorem} is a \emph{strong $\BV$ extension domain}.
Additionally, we say that an open set $\Omega\subset X$ is a $\BV$ extension domain with a constant $c_{\Omega}>0$ if for every $u\in\BV(\Omega)$, there is an extension $Eu\in \BV(X)$ with $Eu|_{\Omega}=u$ and $\Vert Eu\Vert_{\BV(X)}\le c_{\Omega}\Vert u\Vert_{\BV(\Omega)}$.
\end{definition}

Thus the difference between a $\BV$ extension domain and a strong $\BV$ extension domain is that for a $\BV$ extension domain, we do not require $\Vert D(Eu)\Vert(\partial\Omega)=0$.

\section{Traces of $\BV$ functions}

Closely related to extensions is the concept of boundary traces. We give the following definition.

\begin{definition}
For a $\mu$-measurable set $\Omega\subset X$ and a $\mu$-measurable function $u$ on $\Omega$, a function $T_{\Omega}u$ defined on $\partial\Omega$ is a boundary trace of $u$ if for $\mathcal H$-a.e. $x\in\partial\Omega$, we have
\[
\lim_{r\to 0}\,\vint{B(x,r)\cap\Omega}|u-T_{\Omega}u(x)|\,d\mu=0.
\]
\end{definition}

For classical results on boundary traces of $\BV$ functions in the Euclidean setting, see e.g. \cite[Chapter 3]{AFP} or \cite[Chapter 2]{Gi}. As regards the metric setting, in \cite[Theorem 5.7]{HKLL} it was shown that if $\Omega$ is a strong $\BV$ extension domain and satisfies the weak measure density condition \eqref{eq:weak measure density condition}, then the boundary trace $T_{\Omega}u$ exists, that is, $T_{\Omega}u(x)$ is well-defined for $\mathcal H$-a.e. $x\in \partial\Omega$. The proof is based on the fact that since $\Vert D(Eu)\Vert(\partial\Omega)=0$, the boundary $\partial\Omega$ and the jump set $S_{Eu}$ of the extension $Eu$ intersect only in a set of $\mathcal H$-measure zero, due to \eqref{eq:decomposition}.

Next we consider a somewhat different approach to traces, which requires less to be assumed of the set $\Omega$. The following lemma will be useful, and the proof will be similar to the one used in the Euclidean case in \cite{EvGa}.

\begin{lemma}\label{lem:trace type result}
Let $u\in\BV(X)$ and let $\Omega\subset X$ be a $\mu$-measurable set. Consider points $x\in S_u$ for which
\begin{equation}\label{eq:density of Omega}
\liminf_{r\to 0}\frac{\mu(B(x,r)\cap\Omega)}{\mu(B(x,r))}\ge c,
\end{equation}
where $c>0$ is a constant, and
\begin{equation}\label{eq:density estimate for level sets}
\lim_{r\to 0}\,\frac{\mu(B(x,r)\cap \Omega\setminus \{u>t\})}{\mu(B(x,r)\cap\Omega)}=0
\end{equation}
for every $t<u^{\vee}(x)$.
For $\mathcal H$-a.e. such point, we have
\[
\lim_{r\to 0}\,\vint{B(x,r)\cap \Omega}|u-u^{\vee}(x)|^{Q/(Q-1)}\,d\mu=0,
\]
where the number $Q>1$ was defined in \eqref{eq:definition of Q}. The corresponding result for $u^{\wedge}(x)$ can be formulated similarly.
\end{lemma}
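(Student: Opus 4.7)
The plan is to split $|u - u^\vee(x)|^{Q/(Q-1)}$ into an upper and a lower tail and to bound each via the asymptotic Sobolev--Poincar\'e inequality \eqref{eq:asymptotic poincare} applied to a suitable truncation. Write $M := u^\vee(x)$. First I would restrict to those $x$ for which both $M$ and $u^\wedge(x)$ are finite, which holds at $\mathcal H$-a.e.\ $x$ satisfying the hypotheses by \eqref{eq:decomposition} together with local integrability of $u$. For the upper tail set $v := (u - M)_+$; a direct computation from the definitions of $u^\vee$ and $u^\wedge$ yields $v^\vee(x) = v^\wedge(x) = 0$, so $v$ has approximate limit $0$ at $x$. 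Then \eqref{eq:asymptotic poincare} reduces matters to showing $r\|Dv\|(B(x, 2\lambda r))/\mu(B(x, 2\lambda r)) \to 0$. Via the coarea representation $\|Dv\|(\,\cdot\,) = \int_M^\infty P(\{u > s\},\,\cdot\,)\,ds$, each level set $\{u > s\}$ with $s > M$ has density $0$ at $x$, so $x \notin \partial^* \{u > s\}$, and standard differentiation of the perimeter measure in the form \eqref{eq:def of theta} gives the decay at $\mathcal H$-a.e.\ such $x$. Hypothesis \eqref{eq:density of Omega} transfers the bound from $\vint{B(x, r)}$ to $\vint{B(x, r) \cap \Omega}$ up to a constant.

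For the lower tail the obstacle is that the natural truncation $(t - u)_+$ typically does not have approximate limit $0$ at $x$ in all of $X$, because \eqref{eq:density estimate for level sets} controls $\{u \le t\}$ only within $\Omega$. My trick is a two-step truncation. Fix $\varepsilon > 0$ and set $t := M - \varepsilon$; the elementary bound $(M - u)_+ \le \varepsilon + (t - u)_+$ reduces the lower tail to showing $\vint{B(x, r) \cap \Omega}((t - u)_+)^{Q/(Q-1)}\,d\mu \to 0$, after which letting $\varepsilon \to 0$ finishes. Apply the layer-cake formula
\[
\vint{B \cap \Omega} ((t - u)_+)^{Q/(Q-1)}\,d\mu = \frac{Q}{Q-1} \int_0^\infty \alpha^{1/(Q-1)}\,\frac{\mu(\{u < t - \alpha\} \cap B \cap \Omega)}{\mu(B \cap \Omega)}\,d\alpha
\]
and split the $\alpha$-integral at a height $A$ chosen so large that $t - A < u^\wedge(x)$. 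The contribution from $\alpha \le A$ is dominated by $A^{Q/(Q-1)}\,\mu(\{u < t\} \cap B \cap \Omega)/\mu(B \cap \Omega)$, which tends to $0$ by \eqref{eq:density estimate for level sets}. For $\alpha > A$, substitute $\alpha = A + \beta$ and use $(A + \beta)^{1/(Q-1)} \le C_Q(A^{1/(Q-1)} + \beta^{1/(Q-1)})$; the remaining piece is controlled by $A^{1/(Q-1)}\vint{B \cap \Omega} w\,d\mu + \vint{B \cap \Omega} w^{Q/(Q-1)}\,d\mu$ where $w := (t - A - u)_+$. Because $t - A < u^\wedge(x)$, the set $\{u < t - A\}$ has density $0$ at $x$, so $w^\vee(x) = w^\wedge(x) = 0$; the asymptotic Sobolev--Poincar\'e inequality applies to $w$ exactly as in the upper tail---the coarea decomposition now involves perimeters $P(\{u < \tau\},\,\cdot\,)$ for $\tau < u^\wedge(x)$, each of whose measure-theoretic boundary avoids $x$---and \eqref{eq:density of Omega} again transfers to averages over $B \cap \Omega$.

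The main obstacle is the lower tail: the one-sided nature of \eqref{eq:density estimate for level sets} prevents a direct application of the asymptotic Sobolev--Poincar\'e inequality, which operates naturally on full balls of $X$. The shallow cut at $t$ absorbs the bulk of the error into an $O(\varepsilon)$ term, and the deeper cut at $t - A$ below $u^\wedge(x)$ restores the global approximate-limit-$0$ condition needed to invoke \eqref{eq:asymptotic poincare}.
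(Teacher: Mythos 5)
Your skeleton---an $\eps$-shell around $u^{\vee}(x)$, a middle band killed by \eqref{eq:density estimate for level sets} (via hypothesis \eqref{eq:density of Omega} to pass between $B(x,r)$ and $B(x,r)\cap\Omega$), and two tails handled by the asymptotic Sobolev--Poincar\'e inequality \eqref{eq:asymptotic poincare} applied to truncations---is the same as the paper's. The gap is in the tails. You truncate at the exact level $u^{\vee}(x)$ (and, for the lower tail, at an $x$-dependent level $t-A<u^{\wedge}(x)$) and assert that
\[
\limsup_{r\to 0}\,r\,\frac{\Vert D(u-u^{\vee}(x))_+\Vert(B(x,r))}{\mu(B(x,r))}=0
\]
for $\mathcal H$-a.e.\ such $x$ by ``standard differentiation of the perimeter measure''. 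This does not follow. The weak-type density estimate for a Radon measure $\nu$ (the set where $\limsup_r r\nu(B(x,r))/\mu(B(x,r))>\delta$ has $\mathcal H$-measure at most $C\delta^{-1}\nu(X)$, and is $\mathcal H$-null inside any Borel set that is $\nu$-null) applies to a \emph{fixed} measure; here $\Vert D(u-u^{\vee}(x))_+\Vert$ varies with $x$ through the truncation level, so there is no single measure and no single null set to differentiate against. The alternative reading---$rP(\{u>s\},B(x,r))/\mu(B(x,r))\to 0$ for each fixed $s>u^{\vee}(x)$ because $x\notin\partial^*\{u>s\}$---is fine for countably many $s$, but you must then push this pointwise-in-$s$ decay through the coarea integral $\int_{u^{\vee}(x)}^{\infty}P(\{u>s\},B(x,r))\,ds$, and reverse Fatou needs a domination that is not available. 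In fact, by \eqref{eq:decomposition} the jump part of $\Vert D(u-u^{\vee}(x))_+\Vert$ near $x$ has density comparable to $(u^{\vee}(y)-u^{\vee}(x))_+$ with respect to $\mathcal H$ restricted to $S_u$, so your claim amounts to a Lebesgue-point property of $u^{\vee}$ with respect to the (generally non-doubling, only $\sigma$-finite) measure $\mathcal H\lfloor S_u$---exactly the kind of fine rectifiability statement that is unavailable in this setting and that the lemma is designed to circumvent.

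The paper's device is to truncate at an arbitrary large level $M$ rather than at $u^{\vee}(x)$: the tails are then bounded by $C\limsup_r r\Vert D(u\mp M)_{\pm}\Vert(B(x,r))/\mu(B(x,r))$, and since $\Vert D(u-M)_+\Vert(X)=\int_M^{\infty}P(\{u>s\},X)\,ds\to 0$ as $M\to\infty$, the weak-type estimate shows that the set where this quantity exceeds $\eps$ has arbitrarily small $\mathcal H$-measure; letting $M\to\infty$ and then $\eps\to 0$ gives the conclusion $\mathcal H$-a.e., at the price of only ever proving a bound $C\eps$ outside a small exceptional set rather than exact vanishing at a fixed truncation. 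Your proof is repaired by making the same replacement in both tails; the rest, including the layer-cake treatment of the middle band and the two-step cut $t=u^{\vee}(x)-\eps$, $t-A$, is sound and is essentially the paper's estimate of the second and third terms in its decomposition.
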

\begin{proof}
Let $\eps>0$, and let $x\in X$ satisfy all the assumptions of the lemma. We can also assume that $-\infty<u^{\wedge}(x)<u^{\vee}(x)<\infty$, as this holds for $\mathcal H$-a.e. $x\in S_u$ \cite[Lemma 3.2]{KKST}.
Given any $r>0$, we calculate
\begin{equation}\label{eq:trace type first estimate}
\begin{split}
&\vint{B(x,r)\cap \Omega}|u-u^{\vee}(x)|^{Q/(Q-1)}\,d\mu\\
&\qquad  = \frac{1}{\mu(B(x,r)\cap\Omega)}\int_{B(x,r)\cap\Omega\cap \{u^{\vee}(x)-\eps< u< u^{\vee}(x)+\eps\}}|u-u^{\vee}(x)|^{Q/(Q-1)}\,d\mu\\
&\qquad\qquad  +\frac{1}{\mu(B(x,r)\cap\Omega)}\int_{B(x,r)\cap \Omega \setminus \{u>u^{\vee}(x)-\eps\}}|u-u^{\vee}(x)|^{Q/(Q-1)}\,d\mu\\
&\qquad\qquad  +\frac{1}{\mu(B(x,r)\cap\Omega)}\int_{B(x,r)\cap \Omega \cap \{u\ge u^{\vee}(x)+\eps\}}|u-u^{\vee}(x)|^{Q/(Q-1)}\,d\mu.
\end{split}
\end{equation}
The first term on the right-hand side is clearly at most $\eps^{Q/(Q-1)}$. Let $M>0$ with $-M<u^{\vee}(x)-\eps$. The second term can be estimated as follows:
\begin{align*}
&\frac{1}{\mu(B(x,r)\cap\Omega)}\int_{B(x,r)\cap \Omega \setminus \{u>u^{\vee}(x)-\eps\}}|u-u^{\vee}(x)|^{Q/(Q-1)}\,d\mu\\
&\qquad\quad \le |-M-u^{\vee}(x)|^{Q/(Q-1)}\frac{\mu(B(x,r)\cap \Omega \setminus \{u>u^{\vee}(x)-\eps\})}{\mu(B(x,r)\cap\Omega)}\\
&\qquad\qquad\quad +\frac{1}{\mu(B(x,r)\cap\Omega)}\int_{B(x,r)\cap\{u<-M\}}|u-u^{\vee}(x)|^{Q/(Q-1)}\,d\mu.
\end{align*}
By \eqref{eq:density estimate for level sets}, the first term on the right-hand side goes to zero as $r\to 0$.
The third term of \eqref{eq:trace type first estimate} can be estimated similarly, using the definition of the approximate upper limit, provided we also require that $M>u^{\vee}(x)+\eps$. In total, we get
\begin{align*}
&\limsup_{r\to 0}\,\vint{B(x,r)\cap \Omega}|u-u^{\vee}(x)|^{Q/(Q-1)}\,d\mu\\
&\quad\le\eps^{Q/(Q-1)}+\limsup_{r\to 0}\frac{1}{\mu(B(x,r)\cap\Omega)}\int_{B(x,r)\cap\{|u|>M\}}|u-u^{\vee}(x)|^{Q/(Q-1)}\,d\mu.
\end{align*}
Here we have by \eqref{eq:density of Omega}
\begin{align*}
&\limsup_{r\to 0}\frac{1}{\mu(B(x,r)\cap\Omega)}\int_{B(x,r)\cap\{u>M\}}|u-u^{\vee}(x)|^{Q/(Q-1)}\,d\mu\\
&\qquad\quad\le 2^{Q/(Q-1)}\limsup_{r\to 0}\frac{1}{c\mu (B(x,r))}\int_{B(x,r)}(u-M)_+^{Q/(Q-1)}\,d\mu\\
&\qquad\qquad +2^{Q/(Q-1)}|M-u^{\vee}(x)|^{Q/(Q-1)}\limsup_{r\to 0}\frac{\mu(B(x,r)\cap\{u>M\})}{c\mu(B(x,r))}\\
&\qquad\qquad\qquad \le C\limsup_{r\to 0}\left(r\frac{\Vert D(u-M)_+\Vert(B(x,r))}{\mu(B(x,r))}\right)^{Q/(Q-1)},
\end{align*}
where the last inequality follows from the fact that $M>u^{\vee}(x)$, as well as \eqref{eq:asymptotic poincare}. Moreover, $C=C(c_d,c_P,\lambda,c)$. An analogous estimate holds for the set $\{u<-M\}$, provided that we also have $-M<u^{\wedge}(x)$, and then in total we get
\begin{align*}
&\limsup_{r\to 0}\left(\,\vint{B(x,r)\cap \Omega}|u-u^{\vee}(x)|^{Q/(Q-1)}\,d\mu\right)^{(Q-1)/Q}\\
&\qquad\le \eps+ C\limsup_{r\to 0}r\frac{\Vert D(u-M)_+\Vert(B(x,r))}{\mu(B(x,r))}\\
&\qquad\qquad\qquad\qquad+C\limsup_{r\to 0}r\frac{\Vert D(u+M)_-\Vert(B(x,r))}{\mu(B(x,r))}.
\end{align*}
Since the number $M$ can be chosen to be arbitrarily large, it is straightforward to show that the right-hand side of the above inequality is smaller than $C\eps$ outside a set of arbitrarily small $\mathcal H$-measure (see e.g. the proof of \cite[Theorem 3.5]{KKST}). Since $\eps>0$ was arbitrary, we have the result.
\end{proof}

Before considering traces, we prove the following result which is in close relation with the concept of traces. The theorem strengthens \cite[Theorem 1.1]{KKST}.
\begin{theorem}\label{thm:jump set}
Let $u\in\BV(X)$. Then for $\mathcal H$-a.e. $x\in S_u$, there exist $t_1,t_2\in (u^{\wedge}(x),u^{\vee}(x))$ such that
\[
\lim_{r\to 0}\,\vint{B(x,r)\cap\{u>t_2\}}|u-u^{\vee}(x)|^{Q/(Q-1)}\,d\mu=0
\]
and
\[
\lim_{r\to 0}\,\vint{B(x,r)\cap\{u< t_1\}}|u-u^{\wedge}(x)|^{Q/(Q-1)}\,d\mu=0.
\]
\end{theorem}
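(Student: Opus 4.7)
The plan is to reduce both limits to Lemma~\ref{lem:trace type result}, applying it with $\Omega = \{u > t_2\}$ for the first limit and (via the analogous version for $u^\wedge$ noted at the end of that lemma) with $\Omega = \{u < t_1\}$ for the second. The task is to produce, for $\mathcal{H}$-almost every $x \in S_u$, a value $t_2 \in (u^\wedge(x), u^\vee(x))$ satisfying both hypotheses \eqref{eq:density of Omega} and \eqref{eq:density estimate for level sets} of that lemma with $\Omega = \{u > t_2\}$; the existence of $t_1$ will then follow from the symmetric argument applied to $-u$.

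First I fix a countable dense subfamily $\{s_k\}$ of the conull set $\mathcal T := \{s \in \R : \{u > s\} \text{ has locally finite perimeter}\}$, the conullity being given by the coarea formula \eqref{eq:coarea}. For each $s_k$, the density estimate \eqref{eq:density of E} for $\{u > s_k\}$ holds off an $\mathcal H$-null set, and I discard the countable union of these exceptional sets. Since the very definitions of $u^\wedge$ and $u^\vee$ force every $x \in S_u$ to belong to $\partial^*\{u > s\}$ for every $s \in (u^\wedge(x), u^\vee(x))$, the estimate \eqref{eq:density of E} gives $\liminf_{r \to 0}\mu(B(x,r) \cap \{u > s_k\})/\mu(B(x,r)) \ge \gamma$ for $\mathcal H$-a.e. $x \in S_u$ and every such $s_k$, verifying hypothesis \eqref{eq:density of Omega} with $c = \gamma$.

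The main content is \eqref{eq:density estimate for level sets}. Since $\{u > t_2\} \setminus \{u > t\} = \{t_2 < u \le t\}$ for $t > t_2$ and is empty otherwise, combined with the density lower bound above, \eqref{eq:density estimate for level sets} reduces to the assertion $\mu(B(x,r) \cap \{t_2 < u \le t\})/\mu(B(x,r)) \to 0$ as $r \to 0$ for every $t \in (t_2, u^\vee(x))$. To produce a suitable $t_2$, I apply the asymptotic Sobolev--Poincar\'e inequality \eqref{eq:asymptotic poincare} to the truncation $w := (u - u^\vee(x))_+$, which enjoys $w^\wedge(x) = w^\vee(x) = 0$ at every $x \in S_u$: indeed $\{w > \eta\} = \{u > u^\vee(x) + \eta\}$ has zero density at $x$ for every $\eta > 0$ directly from the definition of $u^\vee$, while $\{w < t\} = \{u < u^\vee(x) + t\}$ has density $1$ for every $t > 0$ for the same reason. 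The resulting $L^{Q/(Q-1)}$-estimate on $w$, combined with the ``exceptional set of arbitrarily small $\mathcal H$-measure'' device used at the end of the proof of Lemma~\ref{lem:trace type result} (with the parameter $u^\vee(x) - t_2 \downarrow 0$ playing the role of ``$M \to \infty$''), lets me select $t_2 \in \{s_k\}$ close enough to $u^\vee(x)$ so that \eqref{eq:density estimate for level sets} holds for $\mathcal H$-a.e. $x \in S_u$. The diagonal extraction of a single valid $t_2$ that simultaneously handles every required $t$, and the underlying interchange of the $r \to 0$ and $t_2 \uparrow u^\vee(x)$ limits, constitute the main obstacle. Once $t_2$ is produced, Lemma~\ref{lem:trace type result} yields the first limit directly, and the second follows analogously.
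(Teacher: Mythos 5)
Your reduction to Lemma \ref{lem:trace type result} is the right frame, and your verification of hypothesis \eqref{eq:density of Omega} (via $x\in\partial^*\{u>s\}$ for all $s\in(u^{\wedge}(x),u^{\vee}(x))$ together with \eqref{eq:density of E} applied to the level sets at levels in a countable dense set) matches the paper. But the heart of the proof is the verification of \eqref{eq:density estimate for level sets}, i.e.\ producing $t_2$ so that the intermediate-level sets $\{t_2<u\le t\}$ have vanishing density at $x$ for every $t\in(t_2,u^{\vee}(x))$, and here your argument does not go through. The truncation $w=(u-u^{\vee}(x))_+$ vanishes identically on $\{u\le u^{\vee}(x)\}$, so the asymptotic Sobolev--Poincar\'e estimate \eqref{eq:asymptotic poincare} applied to $w$ gives no control whatsoever on $\mu(B(x,r)\cap\{t_2<u\le t\})$ for $t<u^{\vee}(x)$; it only controls the part of $u$ above $u^{\vee}(x)$, which is already handled inside Lemma \ref{lem:trace type result} itself. (There is also a secondary problem: $w$ depends on $x$, so the ``small exceptional set'' covering device from the end of the lemma's proof, which relies on $M$ ranging over a fixed sequence and $\Vert D(u-M)_+\Vert$ being a fixed Radon measure, does not transfer.) You explicitly flag the interchange of the $r\to 0$ and $t_2\uparrow u^{\vee}(x)$ limits as ``the main obstacle,'' and indeed that obstacle is exactly what remains unproved.

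The paper closes this gap with a different, essentially combinatorial mechanism. For $s<t$ in the countable dense set $T$ of good levels, the difference sets $E_s\setminus E_t=\{s<u\le t\}$ are themselves of finite perimeter (\cite[Proposition 4.7]{Mir}), so by \eqref{eq:density of E} there is an $\mathcal H$-null set off which the following dichotomy holds: either $E_s\setminus E_t$ has upper density $0$ at $x$, or $x\in\partial^*(E_s\setminus E_t)$ and its lower density is at least $\gamma>0$. For an increasing chain $s_1<s_2<\cdots$ in $T\cap(u^{\wedge}(x),u^{\vee}(x))$ the consecutive differences $E_{s_i}\setminus E_{s_{i+1}}$ are pairwise disjoint, so at most $1/\gamma$ of them can have lower density $\ge\gamma$; hence the chain of ``bad'' levels is finite, and above the last one the required $t_2$ exists with $\mu(B(x,r)\cap E_{t_2}\setminus E_t)/\mu(B(x,r))\to 0$ for all $t\in(t_2,u^{\vee}(x))$. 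This disjointness-plus-density-threshold argument is the missing idea; some version of it (rather than a Poincar\'e estimate on a truncation) is needed, since the claim that intermediate levels carry no mass is a statement about the fine structure of sets of finite perimeter, not about the size of $\Vert Du\Vert$ near $x$.
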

\begin{proof}
In this proof we denote, for brevity, the super-level sets of $u$ by $E_t:=\{u>t\}$, $t\in\R$.
By the coarea formula \eqref{eq:coarea}, there is a countable dense set $T\subset \R$ such that for every $t\in T$, the set $E_t$ is of finite perimeter. Let
\begin{equation}\label{eq:def of N}
N:=\bigcup_{t\in T} \left\{x\in \partial^*E_t:\, \eqref{eq:density of E}\textrm{ does not hold at }x\textrm{ with }E\hookrightarrow E_t\right\}
\end{equation}
and
\begin{equation}\label{eq:def of Ntilde}
\widetilde{N}:=\bigcup_{s,t\in T} \left\{x\in \partial^*(E_{s}\setminus  E_{t}):\, \eqref{eq:density of E}\textrm{ does not hold at }x\textrm{ with }E\hookrightarrow E_{s}\setminus E_{t}\right\}.
\end{equation}
Since the sets $E_s\setminus E_t$, $s,t\in T$, are also of finite perimeter by \cite[Proposition 4.7]{Mir}, we have $\mathcal H(N\cup\widetilde{N})=0$.
From the definitions of the lower and upper approximate limits it follows that whenever $x\in S_u$, it is true that $x\in \partial^*E_t$ for every $t\in (u^{\wedge}(x),u^{\vee}(x))$.
Now, at a point $x\in S_u\setminus (N\cup\widetilde{N})$, the number $t_2$ can be chosen as follows. If
\begin{equation}\label{eq:measure theoretic boundary for two level sets}
\limsup_{r\to 0}\frac{\mu(B(x,r)\cap E_{s}\setminus E_{t})}{\mu(B(x,r))}>0
\end{equation}
for $s,t\in T\cap (u^{\wedge}(x),u^{\vee}(x))$, then we have $x\in\partial^*(E_{s}\setminus E_{t})$, and since $x\notin \widetilde{N}$, we actually have
\[
\liminf_{r\to 0}\frac{\mu(B(x,r)\cap E_{s}\setminus E_{t})}{\mu(B(x,r))}\ge \gamma.
\]
Here $\gamma>0$ is a constant. Thus, if \eqref{eq:measure theoretic boundary for two level sets} holds for all consecutive numbers in an increasing sequence $s<t<\ldots \in T\cap (u^{\wedge}(x),u^{\vee}(x))$, the sequence must be finite, and so we have
\[
\limsup_{r\to 0}\frac{\mu(B(x,r)\cap E_{t_2}\setminus E_{t})}{\mu(B(x,r))}=0
\]
for some $t_2\in (u^{\wedge}(x),u^{\vee}(x))\cap T$ and all $t\in (t_2,u^{\vee}(x))$.

Finally, since the set $T$ is countable, the union of the exceptional sets of Lemma \ref{lem:trace type result}, with $\Omega\hookrightarrow E_s$ and $s\in T$, has $\mathcal H$-measure zero. Thus we can assume that $x$ is outside this set, and then
Lemma \ref{lem:trace type result} gives the result for the set $E_{t_2}=\{u>t_2\}$. The proof for the number $t_1$ and the set $\{u< t_1\}$ is analogous.
\end{proof}

Now we proceed to consider traces. First we present an additional assumption on the space $X$.
Following \cite[Definition 6.1]{AMP}, we say that a space satisfies the \emph{locality condition} if, given any two sets of locally finite perimeter $E_1,E_2\subset X$, we have $\theta_{E_1}(x)=\theta_{E_2}(x)$ for $\mathcal H$-a.e. $x\in \partial^*E_1\cap\partial^*E_2$ --- recall the definition of $\theta_E$ from \eqref{eq:def of theta}. The above could as well be formulated with the additional assumption $E_1\subset E_2$, see the discussion following Definition 5.9 in \cite{HKLL}.

Here we give the following stronger condition.

\begin{definition}\label{def:strong locality}
The space $X$ satisfies the \emph{strong locality condition} if, given any two sets of locally finite perimeter $E_1\subset E_2\subset X$, we have for $\mathcal H$-a.e. $x\in \partial^{*}E_1 \cap \partial^{*} E_2$
\begin{equation}\label{eq:locality condition}
\lim_{r\to 0}\frac{\mu((E_2\setminus E_1)\cap B(x,r))}{\mu(B(x,r))}= 0.
\end{equation}
\end{definition}

Strong locality is indeed stronger than locality, as we will soon see.

Condition \eqref{eq:locality condition} is, in particular, satisfied if for any sets of locally finite perimeter $E_1\subset E_2\subset X$, the sets $E_1$ and $E_2$ have the same density at $\mathcal H$-a.e. $x\in \partial^{*}E_1\cap \partial^{*}E_2$.
Furthermore, the assumption $E_1\subset E_2$ is again essentially unnecessary, as the following lemma demonstrates.
\begin{lemma}
Assume that $X$ satisfies the strong locality condition. Let $E_1,E_2\subset X$ be sets of locally finite perimeter. Then for $\mathcal H$-a.e. $x\in \partial^{*}E_1 \cap \partial^{*} E_2$, we have either
\begin{equation}\label{eq:symmetric difference}
\lim_{r\to 0}\frac{\mu ((E_1 \bigtriangleup E_2) \cap B(x,r))}{\mu (B(x,r))}= 0,
\end{equation}
or the same with the substitution $E_2 \hookrightarrow E_2^{c}$ (complement of $E_2$). Here $\bigtriangleup$ is the symmetric difference.
\end{lemma}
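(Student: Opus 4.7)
My plan is to reduce the symmetric statement to the nested hypothesis of Definition \ref{def:strong locality} by passing to the intersection $F:=E_1\cap E_2$ and the union $G:=E_1\cup E_2$. By \cite[Proposition 4.7]{Mir} both $F$ and $G$ are of locally finite perimeter, and we have the nesting $F\subset E_i \subset G$ for $i=1,2$. The bookkeeping will be carried by the elementary identities
\[
E_1\setminus F=G\setminus E_2 = E_1\setminus E_2,\qquad E_2\setminus F=G\setminus E_1 = E_2\setminus E_1,
\]
together with $E_1\bigtriangleup E_2 = G\setminus F$ and, crucially, $E_1\bigtriangleup E_2^c = F\cup G^c$.

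Next I apply the strong locality condition to each of the four nested pairs $(F,E_1)$, $(F,E_2)$, $(E_1,G)$, $(E_2,G)$, and let $N$ denote the union of the resulting four $\mathcal H$-null exceptional sets, together with the $\mathcal H$-null set on which \eqref{eq:density of E} fails for $E_1$ or $E_2$. I then fix $x\in (\partial^*E_1\cap \partial^*E_2)\setminus N$ and split into cases according to the quantities
\[
\alpha_F:=\limsup_{r\to 0}\frac{\mu(F\cap B(x,r))}{\mu(B(x,r))},\qquad \alpha_{G^c}:=\limsup_{r\to 0}\frac{\mu(G^c\cap B(x,r))}{\mu(B(x,r))}.
\]

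If $\alpha_F>0$, then since $\mu(B(x,r)\setminus F)\ge \mu(B(x,r)\setminus E_1)$, the lower density of $F^c$ at $x$ is at least $\gamma>0$ by \eqref{eq:density of E}, so in fact $x\in\partial^*F$. Strong locality applied to $F\subset E_1$ and $F\subset E_2$ then yields $\mu((E_i\setminus F)\cap B(x,r))/\mu(B(x,r))\to 0$ for $i=1,2$, which by the identities above is precisely \eqref{eq:symmetric difference}. The case $\alpha_{G^c}>0$ is symmetric: using $\mu(B(x,r)\cap G)\ge \mu(B(x,r)\cap E_1)$ one obtains $x\in\partial^*G$, and strong locality applied to $E_1\subset G$ and $E_2\subset G$ again gives \eqref{eq:symmetric difference}. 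If instead $\alpha_F=0$ and $\alpha_{G^c}=0$, then $\mu((F\cup G^c)\cap B(x,r))/\mu(B(x,r))\to 0$, and by the identity $E_1\bigtriangleup E_2^c = F\cup G^c$ this is exactly the second alternative.

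The main obstacle is not analytic but rather organizing the clean case split: the real content of the argument is the observation that the substantial-density bound \eqref{eq:density of E} on sets of finite perimeter is exactly what promotes positive upper density of $F$ (respectively $G^c$) at $x$ into genuine membership in $\partial^*F$ (respectively $\partial^*G$), after which the strong locality hypothesis closes the argument immediately.
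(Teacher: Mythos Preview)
Your argument is correct. It follows the same overall strategy as the paper---apply strong locality to a handful of nested pairs manufactured from $E_1,E_2$, then split into cases according to which auxiliary set has positive upper density at $x$---but the decomposition is organized differently. The paper works with $E_1\cap E_2$ and $E_1\cap E_2^c$ (both sitting inside $E_1$) and uses the two-way split ``either $x\in\partial^*(E_1\cap E_2)$ or $x\in\partial^*(E_1\cap E_2^c)$'', which follows directly from $x\in\partial^*E_1$ without invoking \eqref{eq:density of E}. Your choice of $F=E_1\cap E_2$ and $G=E_1\cup E_2$ is more symmetric in $E_1,E_2$ and leads naturally to the three-way split on $(\alpha_F,\alpha_{G^c})$; the trade-off is that you end up citing \eqref{eq:density of E}, which is actually unnecessary, since positive upper density of $F^c$ (respectively $G$) at $x$ already follows from $F^c\supset E_1^c$ (respectively $G\supset E_1$) and $x\in\partial^*E_1$. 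Either route yields the result with the same amount of work.
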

\begin{proof}
First we note that $E_2^c$, $E_1\cap E_2$, and $E_1\cap E_2^c$ are also sets of locally finite perimeter, see \cite[Proposition 4.7]{Mir}. Take a point $x\in \partial^{*}E_1 \cap \partial^{*} E_2$. For any sets $A_1\subset A_2\subset X$, let $N_{A_1,A_2}$ be the set of points $x\in \partial^*A_1\cap \partial^*A_2$ for which
\[
\limsup_{r\to 0}\frac{\mu((A_2\setminus A_1)\cap B(x,r))}{\mu(B(x,r))}>0.
\]
By \eqref{eq:locality condition}, excluding a $\mathcal H$-negligible set we can assume that 
\[
x\notin N_{E_1\cap E_2,E_1}\cup N_{E_1\cap E_2,E_2}\cup N_{E_1\cap E_2^c,E_1}\cup N_{E_1\cap E_2^c,E_2^c}.
\]
Since $x\in \partial^{*}E_1 \cap \partial^{*} E_2$, by the definition of the measure theoretic boundary we have either $x\in \partial^{*}(E_1\cap E_2)$ or $x\in \partial^{*}(E_1\cap E_2^c)$. Assume the former. Using \eqref{eq:locality condition}, we can calculate
\begin{align*}
&\mu((E_1 \bigtriangleup E_2)\cap B(x,r))=\mu((E_1\setminus E_2)\cap B(x,r))+\mu((E_2\setminus E_1)\cap B(x,r))\\
                       &\qquad\qquad=\mu((E_1\setminus (E_1\cap E_2))\cap B(x,r))+\mu((E_2\setminus (E_1\cap E_2))\cap B(x,r))\\
                       &\qquad\qquad=o(\mu(B(x,r)))\qquad\textrm{as }r\to 0.
\end{align*}
Then assume that $x\in \partial^{*}(E_1\cap E_2^{c})$ instead. Now we calculate exactly as above, with $E_2$ replaced by $E_2^{c}$. This gives the result.
\end{proof}

To see that strong locality is stronger than locality, we note that if $E_1,E_2\subset X$ are sets of locally finite perimeter, then at $\mathcal H$-a.e. $x\in \partial^*E_1\cap\partial^*E_2$ where \eqref{eq:symmetric difference} is satisfied, we have $\theta_{E_1}(x)=\theta_{E_2}(x)$ \cite[Proposition 6.2]{AMP}.


Let $\Omega\subset X$ be a set of locally finite perimeter, and let $u\in\BV(\Omega^*)$, with $\Omega^*$ open. If the space satisfies the locality condition, we can present the decomposition \eqref{eq:decomposition} with $A=\partial^*\Omega$ in the simpler form
\begin{equation}\label{eq:better decomposition}
\Vert Du\Vert(\Omega^*)=\Vert Du\Vert(\Omega^*\setminus\partial^*\Omega)+\int_{\Omega^*\cap\partial^*\Omega}(u^{\vee}-u^{\wedge})\theta_{\Omega}\,d\mathcal H,
\end{equation}
see \cite[Lemma 5.10]{HKLL}.

Now, consider a space that does not satisfy the strong locality condition, i.e. there are sets of locally finite perimeter $E_1\subset E_2\subset X$ and a set $A\subset \partial^*E_1 \cap \partial^*E_2$ with $\mathcal H(A)>0$, such that
\begin{equation}\label{eq:strong locality fails}
\limsup_{r\to 0}\frac{\mu((E_2\setminus E_1)\cap B(x,r))}{\mu(B(x,r))}>0
\end{equation}
for all $x\in A$.
Again we note that $E_2\setminus E_1$ is also a set of locally finite perimeter. For every $x\in A$ we have $x\in \partial^*(E_2\setminus E_1)$, due to \eqref{eq:strong locality fails}. By \eqref{eq:density of E} we then know that for $\mathcal H$-a.e. $x\in A$, the lower density of $E_2\setminus E_1$ is at least $\gamma>0$. Likewise, the lower densities of $E_2^c$ and $E_1$ are at least $\gamma$ for $\mathcal H$-a.e. $x\in A$. Since these three sets are pairwise disjoint, we must have $3\gamma\le 1$.

Thus we have the following example.

\begin{example}
Let $X$ be a space where $\gamma>1/3$, where $\gamma$ is given in \eqref{eq:density of E}. By the above reasoning, $X$ satisfies the strong locality condition. In particular, in (unweighted) Euclidean spaces, the density of a set of locally finite perimeter is known to be exactly $1/2$ at $\mathcal H$-almost every point of its measure theoretic boundary (see e.g. \cite[Theorem 3.59]{AFP}), so the strong locality condition is satisfied.
\end{example}

By introducing weights, we get further examples.

\begin{example}
Let $(X,d,\mu)$ be a space that satisfies the strong locality condition, and replace the measure $\mu$ with the weighted measure $w\,d\mu$, where the weight $w$ is a nonnegative $\mu$-measurable function that is locally bounded and locally bounded away from zero. From the definition of the perimeter measure it follows that the sets of locally finite perimeter in this space are the same as in the unweighted space. Then it easily follows that this weighted space satisfies the strong locality condition as well. In particular, any weighted Euclidean space equipped with the Euclidean distance and a weighted Lebesgue measure, with the weight locally bounded and locally bounded away from zero, satisfies the strong locality condition.
\end{example}


To begin our analysis of traces, in the following theorem we prove the existence of \emph{interior traces}, which are defined similarly to boundary traces.

\begin{theorem}\label{thm:interior traces}
Assume that $X$ satisfies the strong locality condition. Let $\Omega^*$ be an open set, let $u\in \BV(\Omega^*)$, and let $\Omega$ be a set of locally finite perimeter in $\Omega^*$. Then for $\mathcal H$-a.e. $x\in \Omega^*\cap\partial^{*}\Omega$, we can define the interior traces $\{T_{\Omega}u(x),\,T_{X\setminus \Omega} u(x)\}=\{u^{\wedge}(x),u^{\vee}(x)\}$, which satisfy
\[
\lim_{r\to 0}\,\vint{B(x,r)\cap \Omega}|u-T_{\Omega}u(x)|^{Q/(Q-1)}\,d\mu=0
\]
and
\[
\lim_{r\to 0}\,\vint{B(x,r)\setminus\Omega}|u-T_{X\setminus \Omega}u(x)|^{Q/(Q-1)}\,d\mu=0.
\]
\end{theorem}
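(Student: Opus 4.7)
The plan is to apply Lemma \ref{lem:trace type result} directly to $\Omega$ and to $X\setminus\Omega$, using the strong locality condition to supply hypothesis \eqref{eq:density estimate for level sets}. Since the conclusion is $\mathcal H$-a.e. local in $\Omega^*$, I first reduce to the case $u\in\BV(X)$ by replacing $u$ with $\eta u$ for Lipschitz cutoffs $\eta$ supported in $\Omega^*$ and equal to $1$ on a compact exhaustion of $\Omega^*$; this affects neither the pointwise values $u^\wedge,u^\vee$ nor the density ratios near interior points of the cutoff region. Using the coarea formula \eqref{eq:coarea}, I fix a countable dense set $T\subset\R$ such that $E_t:=\{u>t\}$ has finite perimeter for every $t\in T$.

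Next I collect a single $\mathcal H$-null exceptional set $N$ from the union of: (a) the set where \eqref{eq:density of E} fails for $\Omega$ (or equivalently $X\setminus\Omega$); (b) for each pair $s<t$ in $T$, the set in $\partial^*E_s\cap\partial^*E_t$ where $\mu((E_s\setminus E_t)\cap B(x,r))/\mu(B(x,r))\not\to 0$ (strong locality applied to $E_t\subset E_s$); (c) for each $t\in T$, the set in $\partial^*\Omega\cap\partial^*E_t$ where the symmetric-difference dichotomy of the lemma following Definition \ref{def:strong locality} fails for the pair $(\Omega,E_t)$; (d) the set outside $S_u$ where the $L^{Q/(Q-1)}$-Lebesgue conclusion of \cite[Theorem 3.5]{KKST} fails. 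For $x\in(\Omega^*\cap\partial^*\Omega)\setminus(N\cup S_u)$ I simply set $T_\Omega u(x)=T_{X\setminus\Omega}u(x)=\widetilde u(x)$; the desired convergence over $B(x,r)\cap\Omega$ and $B(x,r)\setminus\Omega$ then follows immediately from the Lebesgue property and the positive lower densities of $\Omega$ and $X\setminus\Omega$ provided by (a).

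The main case is $x\in(\Omega^*\cap\partial^*\Omega\cap S_u)\setminus N$. I pick $t^*\in T\cap(u^\wedge(x),u^\vee(x))$, which is nonempty by density; the definitions of the approximate limits force $x\in\partial^*E_{t^*}$, so item (c) supplies the dichotomy. In the alternative $\mu((\Omega\bigtriangleup E_{t^*})\cap B(x,r))=o(\mu(B(x,r)))$, I verify \eqref{eq:density estimate for level sets} for $\Omega$ at every $t<u^\vee(x)$: for $t\in T\cap[t^*,u^\vee(x))$ this follows by combining the symmetric-difference conclusion with (b) applied to $E_t\subset E_{t^*}$; for $t<t^*$ or other $t\notin T$ I sandwich $\Omega\setminus E_t$ inside $\Omega\setminus E_{t'}$ for a nearby $t'\in T$ with $\max(t,t^*)\le t'<u^\vee(x)$, using $E_{t'}\subset E_t$ and $E_{t'}\subset E_{t^*}$. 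Lemma \ref{lem:trace type result} then yields $T_\Omega u(x)=u^\vee(x)$. In the same alternative, $X\setminus\Omega$ is asymptotically $E_{t^*}^c$, and the analogous bookkeeping (controlling $(X\setminus\Omega)\cap\{u\ge t\}$ for $t\in(u^\wedge(x),t^*]$ by $E_{t'}\setminus E_{t^*}$ for a suitable $t'\in T\cap(u^\wedge(x),t)$) combined with the $u^\wedge$-version of Lemma \ref{lem:trace type result} gives $T_{X\setminus\Omega}u(x)=u^\wedge(x)$. The second alternative of the dichotomy is entirely symmetric and simply swaps the two traces.

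The main obstacle I foresee is precisely this last bookkeeping step: hypothesis \eqref{eq:density estimate for level sets} must be verified for \emph{every} real $t<u^\vee(x)$, whereas strong locality is only applied at the countably many $t\in T$. The passage from $T$ to all of $(-\infty,u^\vee(x))$ via monotonicity $s<t\Rightarrow E_t\subset E_s$ and the density of $T$ is conceptually routine but must be handled symmetrically on both sides of the jump and for both $\Omega$ and its complement. Everything else—integrability, the Lebesgue case, and the reduction to $\BV(X)$—is standard once this core density-zero statement has been established at $\mathcal H$-a.e. point of $\Omega^*\cap\partial^*\Omega$.
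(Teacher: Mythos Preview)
Your proposal is correct and follows essentially the same approach as the paper: handle the non-jump points via the $L^{Q/(Q-1)}$ Lebesgue point theorem, and at jump points combine the symmetric-difference dichotomy with Lemma~\ref{lem:trace type result}. One small simplification worth noting: your item (b), applying strong locality to pairs of super-level sets $E_t\subset E_s$, is unnecessary. The paper instead applies the dichotomy of item (c) at \emph{every} $t\in T\cap(u^\wedge(x),u^\vee(x))$ and observes that, since $x\in\partial^*\Omega$, the two alternatives cannot occur for different values of $t$; a direct sandwich $E_{t'}\subset E_t\subset E_s$ with $s,t'\in T$ then extends \eqref{eq:symmetric difference Omega} to all $t\in(u^\wedge(x),u^\vee(x))$, without ever comparing level sets to each other.
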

\begin{proof}
We know that $\mathcal H$-a.e. $x\in \Omega^*\setminus S_u$ is a Lebesgue point of $u$ with
\[
\lim_{r\to 0}\,\vint{B(x,r)}|u-\widetilde{u}(x)|^{Q/(Q-1)}\,d\mu=0,
\]
see \cite[Theorem 3.5]{KKST}. Moreover, $\mathcal H$-a.e. $x\in\Omega^*\cap\partial^*\Omega$ satisfies \eqref{eq:density of E} with $E\hookrightarrow \Omega$, so in these points we can define both $T_{\Omega} u(x)$ and $T_{X\setminus \Omega} u(x)$ simply as the Lebesgue limit $\widetilde{u}(x)$. Let us then consider $x\in \partial^{*}\Omega\cap S_u$, and again we can assume that \eqref{eq:density of E} is satisfied at $x$ with $E\hookrightarrow \Omega$.
We know that $x\in \partial^{*} \{u>t\}$ for every $t\in (u^{\wedge}(x),u^{\vee}(x))$. Let $T$ be a countable dense subset of $\R$ such that $\{u>t\}$ is of finite perimeter in $\Omega^*$ for every $t\in T$.
By the strong locality condition \eqref{eq:symmetric difference}, we now have for $\mathcal H$-a.e. $x\in \partial^{*}\Omega\cap S_u$ either
\begin{equation}\label{eq:symmetric difference Omega}
\lim_{r\to 0}\frac{\mu((\{u>t\}\bigtriangleup \Omega) \cap B(x,r))}{\mu (B(x,r))}= 0
\end{equation}
for every $t\in(u^{\wedge}(x),u^{\vee}(x))\cap T$, or the same with the substitution $\Omega \hookrightarrow \Omega^{c}$. Note that the fact that $x\in \partial^*\Omega$ rules out the possibility that we could have \eqref{eq:symmetric difference Omega} for some values of $t$, and \eqref{eq:symmetric difference Omega} with $\Omega\hookrightarrow\Omega^c$ for other values of $t$. Assuming \eqref{eq:symmetric difference Omega}, it clearly holds for \emph{every} $t\in (u^{\wedge}(x),u^{\vee}(x))$. Then by Lemma \ref{lem:trace type result} we conclude that for $\mathcal H$-a.e. $x\in \partial^*\Omega\cap S_u$ we can define $T_{\Omega}u(x):=u^{\vee}(x)$, and similarly we get $T_{X\setminus \Omega}u(x)=u^{\wedge}(x)$ --- if \eqref{eq:symmetric difference Omega} holds with $\Omega^{c}$ instead of $\Omega$, these are the other way around.
\end{proof}

Next we show that in a space that satisfies the strong locality condition, Theorem \ref{thm:jump set} can be presented in a simpler form.

\begin{theorem}\label{thm:halfspaces}
Assume that $X$ satisfies the strong locality condition. Let $u\in \BV(X)$. Then for $\mathcal H$-a.e. $x\in S_u$, we have for any $t\in (u^{\wedge}(x),u^{\vee}(x))$
\[
\lim_{r\to 0}\,\vint{B(x,r)\cap\{u>t\}}|u-u^{\vee}(x)|^{Q/(Q-1)}\,d\mu=0
\]
and
\[
\lim_{r\to 0}\,\vint{B(x,r)\cap\{u\le t\}}|u-u^{\wedge}(x)|^{Q/(Q-1)}\,d\mu=0.
\]
\end{theorem}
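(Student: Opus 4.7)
The plan is to apply Lemma \ref{lem:trace type result} with $\Omega=\{u>t\}$, and its $u^{\wedge}$ analogue with $\Omega=\{u\le t\}$, for every $t\in(u^{\wedge}(x),u^{\vee}(x))$, using the strong locality condition to verify the level-set density estimate \eqref{eq:density estimate for level sets} in that lemma. The main obstacle is that $t$ ranges over an uncountable set, so I must produce a single $\mathcal H$-null exceptional set of $x$ that works for every such $t$ simultaneously; this I arrange by sandwiching the arbitrary level set $\{u>t\}$ between two level sets drawn from a fixed countable dense family in $\R$.

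Using the coarea formula \eqref{eq:coarea}, I fix a countable dense $T\subset\R$ such that $E_t:=\{u>t\}$ has finite perimeter for every $t\in T$. I then assemble the exceptional set $N$ as the union of: the set where $u^{\wedge}(x)=u^{\vee}(x)$ or either is infinite (null by \cite[Lemma 3.2]{KKST}); the sets where \eqref{eq:density of E} fails for some $E_t$, $t\in T$; the sets where the strong locality condition fails for some pair $E_s\subset E_t$, $s,t\in T$, $s>t$; and the exceptional set from Lemma \ref{lem:trace type result}. A glance at the proof of that lemma shows its exceptional set can be chosen independently of $\Omega$, as it consists of those $x$ for which $\lim_{M\to\infty}\limsup_{r\to 0} r\Vert D(u\mp M)_{\pm}\Vert(B(x,r))/\mu(B(x,r))>0$, a property only of $u$ and $x$. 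Hence $\mathcal H(N)=0$.

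Fix $x\in S_u\setminus N$ and $t\in(u^{\wedge}(x),u^{\vee}(x))$. Choose $t',t''\in T$ with $u^{\wedge}(x)<t'<t<t''<u^{\vee}(x)$, so that $E_{t''}\subset\{u>t\}\subset E_{t'}$. The bound \eqref{eq:density of E} applied to $E_{t''}$ gives $\liminf_{r\to0}\mu(B(x,r)\cap\{u>t\})/\mu(B(x,r))\ge\gamma$, verifying \eqref{eq:density of Omega}. For \eqref{eq:density estimate for level sets} with parameter $s<u^{\vee}(x)$, the case $s\le t$ is trivial; for $s>t$ I pick $s'\in T\cap(s,u^{\vee}(x))$ and observe
\[
\{u>t\}\setminus\{u>s\}\subset E_{t'}\setminus E_{s'}.
\]
Since $x\in\partial^{*}E_{s'}\cap\partial^{*}E_{t'}$, strong locality for the pair $E_{s'}\subset E_{t'}$ makes the measure on the right within $B(x,r)$ equal to $o(\mu(B(x,r)))$; dividing by the density lower bound just established for $\{u>t\}$ yields \eqref{eq:density estimate for level sets}. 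Lemma \ref{lem:trace type result} then produces the first limit.

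The second limit follows by an entirely analogous argument with the $u^{\wedge}$ version of Lemma \ref{lem:trace type result} applied to $\Omega=\{u\le t\}$: the density lower bound comes from $\{u\le t\}\supset X\setminus E_{t''}$ together with \eqref{eq:density of E} for $E_{t''}$, while the level-set hypothesis for $s>u^{\wedge}(x)$ is nontrivial only when $s\le t$, in which case
\[
\{u\le t\}\setminus\{u<s\}=\{s\le u\le t\}\subset E_{t'''}\setminus E_{t''}
\]
for $t'''\in T\cap(u^{\wedge}(x),s)$, again controlled by strong locality on a pair in $T$.
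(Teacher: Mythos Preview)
Your argument is correct, but it is organized differently from the paper's. The paper first invokes Theorem~\ref{thm:interior traces} (interior traces) with $\Omega=E_t$ for each $t\in T$; this packages the strong locality step and the appeal to Lemma~\ref{lem:trace type result} into a single citation and yields the two limits for every $t\in T\cap(u^{\wedge}(x),u^{\vee}(x))$, after which the extension to arbitrary $t$ is obtained by sandwiching $E_{t''}\subset\{u>t\}\subset E_{t'}$ and comparing the integral averages directly. You instead bypass Theorem~\ref{thm:interior traces} and apply Lemma~\ref{lem:trace type result} with $\Omega=\{u>t\}$ for arbitrary $t$, using the sandwich between level sets from $T$ at the level of the \emph{hypotheses} of the lemma rather than its conclusion. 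Your observation that the exceptional set in Lemma~\ref{lem:trace type result} depends only on $u$ (through the tail variation measures $\Vert D(u\mp M)_\pm\Vert$) and on the fixed lower-density constant $c=\gamma$, not on the particular $\Omega$, is what makes this work, and it is correct from the proof of the lemma. The paper's route is more modular, reusing the interior trace theorem; yours is a little more self-contained, since it does not rely on that intermediate result.
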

\begin{proof}
As before, let $T$ be a countable dense subset of $\R$ such that $\{u>t\}$ is of finite perimeter for every $t\in T$, and let $N\subset X$ be defined as in \eqref{eq:def of N}.
Again, we know that for every $x\in S_u$ and every $t\in (u^{\wedge}(x),u^{\vee}(x))$, we have $x\in \partial^*\{u>t\}$.
Let $D\subset X$ consist of the points $x\in\partial^*\{u>t\}$ for some $t\in T$, such that either interior trace of $u$ at $x$ does not exist. 
Since the sets $\{u>t\}$, $t\in T$, are of finite perimeter, by Theorem \ref{thm:interior traces} we have $\mathcal H(D)=0$.  Now, if $x\in S_u \setminus (D\cup N)$, by Theorem \ref{thm:interior traces} we have the result for every $t\in (u^{\wedge}(x),u^{\vee}(x))\cap T$. But since $x\notin N$, we have the result for every $t\in (u^{\wedge}(x),u^{\vee}(x))$.
\end{proof}

Theorem \ref{thm:halfspaces}, and to a lesser extent Theorem \ref{thm:jump set}, are analogues of classical results on $\BV$ functions. To wit, on Euclidean spaces the theorems can be formulated with the level sets $\{u>t\}$ and $\{u\le t\}$ replaced by halfspaces (see \cite[Section 4.5.9]{Fed} or \cite[p. 213]{EvGa}), but in the case of a metric space with the strong locality condition, we must use the level sets which do not necessarily even have density $1/2$ at $x$, see \cite[Example 3.3]{KKST}. However, the lower and upper densities of these sets are restricted by \eqref{eq:density of E}.

Having established the existence of interior traces, we proceed to construct boundary traces. However, for our construction to work, we will again need to assume an extension property. First we present two propositions that are similar to \cite[Proposition 5.11]{HKLL} and \cite[Proposition 5.12]{HKLL}, and are originally based on \cite[Theorem 3.84]{AFP} and \cite[Theorem 3.86]{AFP}.
\begin{proposition}\label{prop:gluing}
Assume that $X$ satisfies the strong locality condition. Let $\Omega^*$ be an open set, and let $\Omega$ be a $\mu$-measurable set with $P(\Omega,\Omega^*)<\infty$. Let $u,v\in \BV(\Omega^*)$ and $w:=u\chi_{\Omega^*\cap\Omega}+v\chi_{\Omega^*\setminus \Omega}$. Then $w\in \BV(\Omega^*)$ if and only if
\begin{equation}\label{eq:trace integrability}
\int_{\Omega^*\cap \partial^* \Omega}|T_{\Omega}u-T_{X\setminus \Omega}v|\,d\mathcal H<\infty.
\end{equation}
Furthermore, we then have
\[
\Vert Dw\Vert(\Omega^*)= \Vert Du\Vert(\Omega^*\cap I)+\Vert Dv\Vert(\Omega^*\cap O)+\int_{\Omega^*\cap\partial^* \Omega}|T_{\Omega}u-T_{X\setminus \Omega}v|\theta_{\Omega}\,d\mathcal H,
\]
where $I$ and $O$ are the measure theoretic interior and exterior (points of density one and zero) of $\Omega$.
\end{proposition}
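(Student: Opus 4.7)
The plan is to apply the simplified decomposition formula \eqref{eq:better decomposition}, valid under strong locality, to the function $w$. Since $P(\Omega,\Omega^*)<\infty$ together with $\theta_\Omega\ge\alpha>0$ from \eqref{eq:def of theta} makes $\Omega^*\cap\partial^*\Omega$ $\sigma$-finite with respect to $\mathcal{H}$, the decomposition applies with $A=\Omega^*\cap\partial^*\Omega$ and, assuming $w\in\BV(\Omega^*)$, yields
\[
\|Dw\|(\Omega^*) = \|Dw\|(\Omega^*\cap I) + \|Dw\|(\Omega^*\cap O) + \int_{\Omega^*\cap\partial^*\Omega}(w^\vee-w^\wedge)\,\theta_\Omega\,d\mathcal{H},
\]
where the bulk piece has been split using the partition $X=I\sqcup O\sqcup\partial^*\Omega$ (with $I,O$ as in the statement).

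Next I would identify the jump integrand via interior traces. At $\mathcal{H}$-a.e.\ $x\in\Omega^*\cap\partial^*\Omega$, both $\Omega$ and $X\setminus\Omega$ have lower density at least $\gamma>0$ by \eqref{eq:density of E}, so Theorem \ref{thm:interior traces} produces both $T_\Omega u(x)$ and $T_{X\setminus\Omega}v(x)$. Because $w=u$ on $\Omega^*\cap\Omega$ and $w=v$ on $\Omega^*\setminus\Omega$, the averages of $w$ over $B(x,r)\cap\Omega$ and $B(x,r)\setminus\Omega$ converge to these respective traces; the defining relations of $w^\wedge,w^\vee$ combined with the positive density lower bounds then give $\{w^\wedge(x),w^\vee(x)\}=\{T_\Omega u(x),T_{X\setminus\Omega}v(x)\}$, so
\[
w^\vee(x)-w^\wedge(x)=|T_\Omega u(x)-T_{X\setminus\Omega}v(x)|.
\]

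For the bulk pieces I would show $\|Dw\|(\Omega^*\cap I)=\|Du\|(\Omega^*\cap I)$ and $\|Dw\|(\Omega^*\cap O)=\|Dv\|(\Omega^*\cap O)$. Since $w=u$ $\mu$-a.e.\ on $\Omega^*\cap I$ and $w=v$ $\mu$-a.e.\ on $\Omega^*\cap O$, these follow by outer regularity: approximate the Borel set $\Omega^*\cap I$ from outside by open $U\subset\Omega^*$ and use that the discrepancy $\|Dw\|(U\setminus I)$ is supported on $\partial^*\Omega\cap U$ and shrinks to zero by the Radon property of the variation, the jump on $\partial^*\Omega$ having been extracted separately. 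Combining the three pieces yields the claimed formula, and the bound $\theta_\Omega\ge\alpha>0$ then shows that $w\in\BV(\Omega^*)$ forces \eqref{eq:trace integrability}. For the converse, if \eqref{eq:trace integrability} holds then $w\in L^1(\Omega^*)$ is immediate, and an approximation argument --- taking $\liploc$ minimizing sequences $u_n,v_n$ for $u,v$, setting $w_n:=u_n\chi_\Omega+v_n\chi_{X\setminus\Omega}$, applying the decomposition to each $w_n$, and passing to the limit via lower semicontinuity of the variation together with convergence of the interior traces in $L^1$ on $\partial^*\Omega$ --- produces the reverse bound $\|Dw\|(\Omega^*)<\infty$. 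The main obstacle I anticipate is this bulk identification, since $I$ and $O$ are Borel but not open: the variation equality does not follow directly from the relaxation definition on open sets, and the outer approximation must carefully separate the contribution of the jump on $\partial^*\Omega$ from the genuine bulk piece.
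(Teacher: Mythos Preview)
Your overall strategy---apply the decomposition \eqref{eq:better decomposition} to $w$, identify the jump integrand on $\partial^*\Omega$ via the interior traces of Theorem \ref{thm:interior traces}, and handle the bulk pieces on $I$ and $O$ separately---is exactly the paper's. The paper in fact refers most of the argument to \cite[Proposition 5.11]{HKLL}, and singles out only the bulk identification $\|Dw\|(\Omega^*\cap I)=\|Du\|(\Omega^*\cap I)$ (and similarly on $O$) as the point deserving its own proof.

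On that point, however, the paper's route is different and cleaner than your outer-regularity sketch. Rather than approximating $I$ by open sets, the paper writes
\[
\|Dw\|(\Omega^*\cap I)\le \|Du\|(\Omega^*\cap I)+\|D(w-u)\|(\Omega^*\cap I)
\]
and kills the second term with the coarea formula \eqref{eq:coarea}: since $w-u=0$ $\mu$-a.e.\ on $I$, one checks directly that $\partial^*\{w-u>t\}\cap\Omega^*\cap I=\emptyset$ for every $t$, whence $P(\{w-u>t\},\Omega^*\cap I)=0$ by \eqref{eq:def of theta}. The reverse inequality is symmetric. This bypasses the obstacle you yourself flagged, and you should adopt it; your outer-regularity idea is not wrong in spirit but, as written, does not separate $\|Dw\|$ from $\|Du\|$ on the approximating open sets, where the two functions genuinely differ.

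Your sketch for the converse direction is where I see a real gap. Setting $w_n:=u_n\chi_\Omega+v_n\chi_{X\setminus\Omega}$ with $u_n,v_n\in\liploc$ does not produce functions in $\liploc(\Omega^*)$, so you cannot read off $\|Dw\|(\Omega^*)$ from upper gradients of $w_n$, and ``applying the decomposition to each $w_n$'' presupposes $w_n\in\BV(\Omega^*)$, which is exactly the type of statement you are trying to prove. Convergence of interior traces in $L^1(\partial^*\Omega,\mathcal H)$ is also not available without further work. The standard way around this (as in \cite{HKLL} and \cite[Theorem 3.84]{AFP}) is instead to build the estimate for $\|Dw\|$ directly from the definition via a suitable approximating sequence, or to reduce to the characteristic-function case via coarea; your proposed route would need substantial repair.
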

\begin{proof}
Instead of giving the whole proof, we refer to \cite[Proposition 5.11]{HKLL}. Crucial in the proof is the existence of interior traces on $\partial^*\Omega$,
which is guaranteed by Theorem \ref{thm:interior traces} and the finite perimeter of $\Omega$ in $\Omega^*$. To obtain the final equality, we also need \eqref{eq:better decomposition}. Additionally, in the proof we need the fact that if $w\in\BV(\Omega^*)$, then
\[
\Vert Dw\Vert(\Omega^*\cap I)=\Vert Du\Vert(\Omega^*\cap I)\quad\ \textrm{and}\quad\ \Vert Dw\Vert(\Omega^*\cap O)=\Vert Dv\Vert(\Omega^*\cap O).
\]
The above can be proved with the help of the coarea formula \eqref{eq:coarea} as follows. We have
\begin{equation}\label{eq:Du is Dw}
\begin{split}
\Vert Dw\Vert(\Omega^*\cap I) &\le \Vert Du\Vert(\Omega^*\cap I)+\Vert D(w-u)\Vert(\Omega^*\cap I)\\
&= \Vert Du\Vert(\Omega^*\cap I)+\int_{-\infty}^{\infty}P(\{w-u>t\},\Omega^*\cap I)\,dt.
\end{split}
\end{equation}
Here we have $w-u=0$ $\mu$-almost everywhere in $\Omega^*\cap I$. The inequality opposite to \eqref{eq:Du is Dw} is obtained similarly, so we only need to prove that $P(\{w-u>t\},\Omega^*\cap I)=0$ for a.e. $t\in \R$.
Consider $x\in\Omega^*\cap I$ and $t\in \R$. We have
\begin{align*}
\lim_{r\to 0}&\frac{\mu(B(x,r)\cap \{w-u>t\})}{\mu(B(x,r))}\\
&=\lim_{r\to 0}\frac{\mu(B(x,r)\cap \{w-u>t\}\cap I)}{\mu(B(x,r))}+\lim_{r\to 0}\frac{\mu(B(x,r)\cap \{w-u>t\}\setminus I)}{\mu(B(x,r))}\\
&=\lim_{r\to 0}\frac{\mu(B(x,r)\cap \{w-u>t\}\cap I)}{\mu(B(x,r)\cap I)}+0\\ 
&=\begin{cases}
                0\quad & \textrm{if }t\ge 0\\
                1\quad & \textrm{if }t<0.
\end{cases}
\end{align*}
Thus we have $\partial^*\{w-u>t\}\cap\Omega^*\cap I=\emptyset$ for all $t\in\R$, and it follows that $P(\{w-u>t\},\Omega^*\cap I)=0$ for a.e. $t\in\R$ by \eqref{eq:def of theta}.
\end{proof}

The following proposition on the integrability of traces, or more precisely integrability of lower and upper approximate limits, can be taken directly from \cite[Proposition 5.12]{HKLL}, where a proof is also given.

\begin{proposition}\label{prop:codimension one boundary}
Let $\Omega^*\subset X$ be open, let $u\in \BV(\Omega^*)$, and let $A\subset \Omega^*$ be a bounded Borel set that satisfies $\dist(A,X\setminus \Omega^*)>0$ and
\begin{equation}\label{eq:codimension one condition}
\mathcal H(A\cap B(x,r))\le c_A\frac{\mu(B(x,r))}{r}
\end{equation}
for every $x\in A$ and $r\in (0,R]$, where $R\in(0,\dist(A,X\setminus \Omega^*))$ and $c_A>0$ are constants. Then
\begin{equation}\label{eq:summability of traces}
\int_{A}(|u^{\wedge}|+|u^{\vee}|)\,d\mathcal{H} \le C\Vert u\Vert_{\BV(\Omega^*)},
\end{equation}
where $C=C(c_d,c_P,\lambda,A,R,c_A)$.
\end{proposition}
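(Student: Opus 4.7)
My plan is to adapt the strategy of \cite[Proposition 5.12]{HKLL}, which combines a layer-cake representation with the coarea formula \eqref{eq:coarea} and a covering argument based on the codimension-one condition \eqref{eq:codimension one condition}. First I would split $u=u_+-u_-$ to reduce to the case $u\ge 0$, so that $u^{\vee}\ge u^{\wedge}\ge 0$, and then bound $\int_A u^{\vee}\,d\mathcal H$ (the bound for $u^{\wedge}$ being analogous). Setting $E_t:=\{u>t\}$ and applying Cavalieri,
\[
\int_A u^{\vee}\,d\mathcal H=\int_0^{\infty}\mathcal H(A\cap\{u^{\vee}>t\})\,dt.
\]
For $\mathcal H$-a.e.\ $x$, the inequality $u^{\vee}(x)>t$ forces $x$ to be either a point of density one of $E_t$ or to lie in $\partial^{*}E_t$, so it suffices to estimate $\int_0^{\infty}\mathcal H(A\cap E_t^{(1)})\,dt$ and $\int_0^{\infty}\mathcal H(A\cap\partial^{*}E_t)\,dt$ separately.

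The boundary contribution is immediate from \eqref{eq:def of theta} and \eqref{eq:coarea}: since $\theta_{E_t}\ge\alpha$, we have $\mathcal H(A\cap\partial^{*}E_t)\le\alpha^{-1}P(E_t,A)$, and integration in $t$ bounds the total by $\alpha^{-1}\Vert Du\Vert(\Omega^{*})$. For the density-one contribution, I would use the boundedness of $A$ and the gap $\dist(A,X\setminus\Omega^{*})>R$ to cover $A$ by a bounded-overlap family of $N$ balls $B_i=B(y_i,R)$ centered in $A$, with $2\lambda B_i\subset\Omega^{*}$; the number $N$ is controlled by $\diam(A)$, $R$ and $c_d$. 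On each $B_i$ the $\BV$ Sobolev--Poincar\'e inequality controls $\Vert u-u_{B_i}\Vert_{L^{Q/(Q-1)}(B_i)}$ by $CR\mu(B_i)^{(Q-1)/Q}\Vert Du\Vert(2\lambda B_i)/\mu(2\lambda B_i)$, while \eqref{eq:codimension one condition} gives $\mathcal H(A\cap B_i)\le c_A\mu(B_i)/R$. I would then work with Lipschitz approximants $u_j\to u$ in $L^1(\Omega^{*})$ satisfying $\int g_{u_j}\to\Vert Du\Vert(\Omega^{*})$, for which $u_j^{\vee}=u_j$ pointwise; combining the above bounds through H\"older's inequality and a chain-of-balls estimate for $u_j$ should yield
\[
\int_{A\cap B_i}|u_j|\,d\mathcal H\le C\bigl(R^{-1}\Vert u_j\Vert_{L^1(2\lambda B_i)}+\Vert Du_j\Vert(2\lambda B_i)\bigr).
\]
Summation over $i$ and passage to the limit $j\to\infty$ by a Fatou-type lower-semicontinuity argument then transfers the bound to $u^{\vee}$ and completes the proof.

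The main obstacle is this density-one term, since $E_t^{(1)}$ does not itself carry perimeter mass: in place of a direct coarea estimate one must convert the $L^{Q/(Q-1)}$-integrability of $u$ furnished by Sobolev--Poincar\'e into $\mathcal H$-integrability on $A\cap B_i$ using the codimension-one bound \eqref{eq:codimension one condition}. The delicate subpoint is that $u^{\vee}$ need not coincide with any Lebesgue-type limit of ball averages of $u$ on the jump set $S_u$, so the passage from the continuous approximants $u_j$ to the pointwise object $u^{\vee}$ requires a careful lower-semicontinuity argument, and is the reason the hypothesis $u\in\BV(\Omega^{*})$ cannot be relaxed to mere $L^1$-integrability.
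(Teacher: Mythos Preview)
The paper itself gives no proof of this proposition; it simply invokes \cite[Proposition~5.12]{HKLL}, so in spirit your plan to adapt that argument is exactly what the paper does. Your overall architecture --- layer-cake, splitting $\{u^{\vee}>t\}$ into $E_t^{(1)}\cup\partial^{*}E_t$, handling $\partial^{*}E_t$ by \eqref{eq:def of theta} and coarea, and handling $E_t^{(1)}$ by a covering based on \eqref{eq:codimension one condition} --- is the right one.

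There is, however, a real gap in the way you propose to treat the density-one term. The route through Lipschitz approximants $u_j$ gives you a bound on $\int_A |u_j|\,d\mathcal H$, but the ``Fatou-type lower-semicontinuity argument'' you allude to does not transfer this to $\int_A u^{\vee}\,d\mathcal H$: convergence $u_j\to u$ in $L^1(\mu)$ says nothing about pointwise behaviour $\mathcal H$-a.e.\ on the $\mu$-negligible set $A$, and in general $\liminf_j u_j(x)\ge u^{\vee}(x)$ fails on $S_u$. You correctly flag this as the delicate point, but the mechanism you name does not close it.

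The fix is to bypass approximation altogether and estimate $u^{\vee}$ directly. Since for $\mathcal H$-a.e.\ $x\in A$ and every $t<u^{\vee}(x)$ the set $E_t$ has lower density at least $\gamma$ at $x$ (by \eqref{eq:density of E} on $\partial^{*}E_t$, and trivially on $E_t^{(1)}$), you get $u^{\vee}(x)\le\gamma^{-1}\liminf_{r\to0}\vint{B(x,r)}|u|\,d\mu$. For the density-one contribution one then argues at the level of sets: if $x\in A\cap E_t^{(1)}$ and $t$ exceeds (a constant times) $\vint{B(x,R)}|u|\,d\mu$, then the density of $E_t$ drops from $1$ at small scales to below $1/2$ at scale $R$, so there is an intermediate radius $r_x$ where the relative isoperimetric inequality yields $\mu(B(x,r_x))/r_x\le C\,P(E_t,2\lambda B(x,r_x))$. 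A $5r$-covering with these balls, combined with \eqref{eq:codimension one condition}, gives $\mathcal H\bigl(A\cap E_t^{(1)}\cap\{t>C\vint{B(\cdot,R)}|u|\}\bigr)\le C\,P(E_t,\Omega^{*})$, which integrates by coarea. The complementary regime $t\lesssim\vint{B(\cdot,R)}|u|$ contributes $\int_A\vint{B(x,R)}|u|\,d\mathcal H(x)$, which your covering of $A$ by finitely many $R$-balls and \eqref{eq:codimension one condition} bound by $C R^{-1}\Vert u\Vert_{L^1(\Omega^{*})}$. This is the argument that actually lies behind \cite[Proposition~5.12]{HKLL}; your Sobolev--Poincar\'e/H\"older outline points at the same estimate but the approximation step should be replaced by the density bound above.
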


Now we get the following boundary trace theorem. Here we need an extension property as given in Definition \ref{def:BV extension domain}, but we do not need to require that $\Vert D(Eu)\Vert(\partial\Omega)=0$.
\begin{theorem}\label{thm:boundary traces}
Assume that $X$ satisfies the strong locality condition. Let $\Omega$ be a $\BV$ extension domain with constant $c_{\Omega}>0$, as well as a set of finite perimeter, and let $u\in \BV(\Omega)$. Then for $\mathcal H$-a.e. $x\in\partial^* \Omega$ we can define the boundary trace $T_{\Omega}u(x)$ that satisfies
\[
\lim_{r\to 0}\,\vint{B(x,r)\cap\Omega}|u-T_{\Omega}u(x)|^{Q/(Q-1)}\,d\mu=0.
\]
Moreover, if the assumptions of Proposition \ref{prop:codimension one boundary} are satisfied with $\Omega^*=X$, $A=\partial^* \Omega$, and constants $R,c_{\partial^*\Omega}>0$, then we have
\[
\Vert T_{\Omega}u \Vert_{L^1(\partial^*\Omega,\mathcal H)}\le C\Vert u\Vert_{\BV(\Omega)},
\]
and furthermore $u\chi_{\Omega}\in \BV(X)$ with $\Vert u\chi_{\Omega}\Vert_{\BV(X)}\le C\Vert u\Vert_{\BV(\Omega)}$ (here $u\chi_{\Omega}$ naturally just means the zero extension of $u$ to the whole space $X$). The constant $C$ depends only on $c_d$, $c_P$, $\lambda$, $\Omega$, $R$, $c_{\partial^*\Omega}$, and $c_{\Omega}$.
\end{theorem}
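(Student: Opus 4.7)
The plan is to reduce everything to an application of the interior trace theorem (Theorem \ref{thm:interior traces}) to an extension of $u$. By Definition \ref{def:BV extension domain}, there is an extension $Eu \in \BV(X)$ with $Eu|_{\Omega} = u$ and $\Vert Eu\Vert_{\BV(X)} \le c_{\Omega}\Vert u\Vert_{\BV(\Omega)}$. Since $\Omega$ is a set of finite perimeter in $X$, Theorem \ref{thm:interior traces} applied to $Eu$ on the open set $\Omega^* = X$ with the set $\Omega$ yields, for $\mathcal H$-a.e.\ $x \in \partial^*\Omega$, well-defined interior traces $T_{\Omega}(Eu)(x), T_{X\setminus\Omega}(Eu)(x) \in \{(Eu)^{\wedge}(x), (Eu)^{\vee}(x)\}$ satisfying the $L^{Q/(Q-1)}$-convergence from the statement. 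Because $Eu = u$ $\mu$-a.e.\ on $\Omega$, defining $T_{\Omega}u(x) := T_{\Omega}(Eu)(x)$ automatically gives the first conclusion of the theorem.

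For the trace norm estimate, the key observation is the pointwise bound $|T_{\Omega}u(x)| \le |(Eu)^{\wedge}(x)| + |(Eu)^{\vee}(x)|$ for $\mathcal H$-a.e.\ $x \in \partial^*\Omega$. Applying Proposition \ref{prop:codimension one boundary} with $\Omega^* = X$, $A = \partial^*\Omega$, the function $Eu$, and the given codimension-one constants $R, c_{\partial^*\Omega}$, we get
\[
\int_{\partial^*\Omega} (|(Eu)^{\wedge}| + |(Eu)^{\vee}|)\,d\mathcal H \le C\Vert Eu\Vert_{\BV(X)} \le C c_{\Omega}\Vert u\Vert_{\BV(\Omega)},
\]
so $\Vert T_{\Omega}u\Vert_{L^1(\partial^*\Omega,\mathcal H)} \le C\Vert u\Vert_{\BV(\Omega)}$ with $C$ depending only on the declared quantities.

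To prove $u\chi_{\Omega} \in \BV(X)$, I would apply the gluing proposition (Proposition \ref{prop:gluing}) with $\Omega^* = X$, with $Eu$ in place of $u$, and $v \equiv 0$. Then $w = Eu\chi_{\Omega} + 0 \cdot \chi_{X\setminus\Omega}$ coincides $\mu$-a.e.\ with $u\chi_{\Omega}$, and the integrability hypothesis \eqref{eq:trace integrability} reduces to $\int_{\partial^*\Omega}|T_{\Omega}u|\,d\mathcal H < \infty$, which we just established. Proposition \ref{prop:gluing} then gives $w \in \BV(X)$ together with
\[
\Vert Dw\Vert(X) = \Vert D(Eu)\Vert(I) + \int_{\partial^*\Omega} |T_{\Omega}u|\,\theta_{\Omega}\,d\mathcal H,
\]
where $I$ is the measure theoretic interior of $\Omega$. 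Combining $\Vert D(Eu)\Vert(I) \le \Vert D(Eu)\Vert(X) \le c_{\Omega}\Vert u\Vert_{\BV(\Omega)}$, the bound $\theta_{\Omega} \le c_d$, the trace estimate above, and the trivial $L^1$ bound $\Vert u\chi_{\Omega}\Vert_{L^1(X)} = \Vert u\Vert_{L^1(\Omega)}$ yields $\Vert u\chi_{\Omega}\Vert_{\BV(X)} \le C\Vert u\Vert_{\BV(\Omega)}$ with the asserted constant dependence.

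There is no real obstacle once one recognizes the correct reduction: the theorem is essentially a packaging of Theorem \ref{thm:interior traces} and Propositions \ref{prop:gluing}–\ref{prop:codimension one boundary} applied to $Eu$. The only points requiring care are checking that the hypothesis $P(\Omega,\Omega^*) < \infty$ for Proposition \ref{prop:gluing} is supplied by the finite perimeter assumption on $\Omega$, and that no new $\mathcal H$-negligible exceptional set is introduced when reinterpreting the interior trace of $Eu$ from inside $\Omega$ as a boundary trace of $u$ — which is immediate since $u = Eu$ identically on $\Omega$.
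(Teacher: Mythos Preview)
Your proposal is correct and follows essentially the same route as the paper: extend $u$ to $Eu\in\BV(X)$, invoke Theorem \ref{thm:interior traces} to obtain the trace, then feed $Eu$ into Proposition \ref{prop:codimension one boundary} for the $L^1$ bound and into Proposition \ref{prop:gluing} with $v\equiv 0$ for the zero-extension statement. The only cosmetic difference is that the paper records the sharper identity $\Vert D(u\chi_\Omega)\Vert(X)=\Vert Du\Vert(\Omega)+\int_{\partial^*\Omega}|T_\Omega u|\,\theta_\Omega\,d\mathcal H$ before estimating, whereas you bound $\Vert D(Eu)\Vert(I)$ directly by $\Vert D(Eu)\Vert(X)$; both give the same conclusion.
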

\begin{proof}
Extend $u$ to $Eu\in\BV(X)$. According to Theorem \ref{thm:interior traces}, for $\mathcal H$-a.e. $x\in \partial^{*} \Omega$ there exists an interior trace $T_{\Omega}(Eu)(x)$ that satisfies
\begin{align*}
&\lim_{r\to 0}\,\vint{B(x,r)\cap \Omega}|u-T_{\Omega}(Eu)(x)|^{Q/(Q-1)}\,d\mu\\
&\qquad\quad =\lim_{r\to 0}\,\vint{B(x,r)\cap \Omega}|Eu-T_{\Omega}(Eu)(x)|^{Q/(Q-1)}\,d\mu=0,
\end{align*}
and so we can define the boundary trace $T_{\Omega}u(x)$ simply as $T_{\Omega}(Eu)(x)$ wherever the latter is defined.

To prove the estimate $\Vert T_{\Omega}u \Vert_{L^1(\partial^*\Omega,\mathcal H)}\le C\Vert u\Vert_{\BV(\Omega)}$, we note that for $\mathcal H$-a.e. $x\in \partial^* \Omega$ we have $T_{\Omega}u(x)=T_{\Omega}Eu(x)\in \{(Eu)^{\wedge}(x),(Eu)^{\vee}(x)\}$ by Theorem \ref{thm:interior traces}. By Proposition \ref{prop:codimension one boundary} and the definition of an extension domain, we get
\begin{equation}\label{eq:trace estimate}
\begin{split}
\Vert T_{\Omega}u \Vert_{L^1(\partial^*\Omega,\mathcal H)}\le \Vert |(Eu)^{\wedge}|+|(Eu)^{\vee}|\Vert_{L^1(\partial^*\Omega,\mathcal H)}&\le C\Vert Eu\Vert_{\BV(X)}\\
&\le Cc_{\Omega}\Vert u\Vert_{\BV(\Omega)},
\end{split}
\end{equation}
with $C=C(c_d,c_P,\lambda,\Omega,R,c_{\partial^*\Omega})$.
Finally, by using the fact that $T_{\Omega}Eu=T_{\Omega}u$ as well as \eqref{eq:trace estimate}, we get
\[
\int_{\partial^* \Omega}|T_{\Omega}(Eu)-T_{X\setminus\Omega}0|\,d\mathcal H= \int_{\partial^* \Omega}|T_{\Omega}u|\,d\mathcal H\le C\Vert u\Vert_{\BV(\Omega)}<\infty.
\]
By Proposition \ref{prop:gluing}, this implies that
\[
u\chi_{\Omega}=(Eu)\chi_{\Omega}+0\chi_{X\setminus \Omega}\in \BV(X),
\]
with
\[
\Vert D(u\chi_{\Omega})\Vert(X)=\Vert Du\Vert(\Omega)+\int_{\partial^* \Omega}|T_{\Omega}u|\theta_{\Omega}\,d\mathcal H \le C\Vert u\Vert_{\BV(\Omega)},
\]
where the inequality follows from \eqref{eq:trace estimate}.
\end{proof}

Again, we could give essentially the same result and proof with the assumptions used in \cite{HKLL} --- instead of assuming strong locality and that $\Omega$ is a $\BV$ extension domain, we could assume that $\Omega$ is a strong $\BV$ extension domain, and that both $\Omega$ and its complement satisfy the weak measure density condition \eqref{eq:weak measure density condition}. With either set of assumptions, we conclude that any $u\in\BV(\Omega)$ can be extended to the whole space simply by zero extension. This is, of course, in stark contrast with many other classes of functions, such as Newtonian functions. We also see that while the extendability of a $\BV$ function can be be used to prove the existence of boundary traces, the integrability of the boundary trace $T_{\Omega}u$ on the boundary $\partial\Omega$ with respect to the measure $\mathcal H$ enables, in turn, the function $u$ to be extended by zero extension. This demonstrates the interrelatedness of $\BV$ extensions and boundary traces.\\

\noindent \textbf{Acknowledgements.} The author thanks Juha Kinnunen and Riikka Korte for their feedback on the earlier drafts of the article.
The author was supported by the Finnish Academy of Science and Letters, the Vilho, Yrj\"o and
Kalle V\"ais\"al\"a Foundation.

\noindent Address:

\noindent Aalto University, School of Science, Department of Mathematics and Systems Analysis, P.O. Box 11100, FI-00076 Aalto, Finland. \\
\noindent E-mail: {\tt panu.lahti@aalto.fi}\\


\begin{thebibliography}{ACMM}

\bibitem{Amb}L.~Ambrosio,
\emph{Fine properties of sets of finite perimeter in doubling metric measure spaces},
Calculus of variations, nonsmooth analysis and related topics.
Set-Valued Anal. 10 (2002), no. 2-3, 111--128.

\bibitem{AFP}L. Ambrosio, N. Fusco, and D. Pallara,
\emph{Functions of bounded variation and free discontinuity problems},
Oxford Mathematical Monographs. The Clarendon Press, Oxford University Press, New York, 2000. xviii+434 pp.

\bibitem{AMP}L.~Ambrosio, M.~Miranda, Jr., and D.~Pallara,
\emph{Special functions of bounded variation in doubling metric measure spaces},
Calculus of variations: topics from the mathematical heritage of E. De Giorgi, 1--45,
Quad. Mat., 14, Dept. Math., Seconda Univ. Napoli, Caserta, 2004. 

\bibitem{BM}A. Baldi and F. Montefalcone,
\emph{A note on the extension of BV functions in metric measure spaces},
J. Math. Anal. Appl. 340 (2008), no. 1, 197--208. 

\bibitem{BB}A.~Bj\"orn and J.~Bj\"orn,
\emph{Nonlinear potential theory on metric spaces},
EMS Tracts in Mathematics, 17.~European Mathematical Society (EMS), Z\"urich, 2011. xii+403 pp.

\bibitem{BBS}A. Bj\"orn, J. Bj\"orn, and N. Shanmugalingam,
\emph{Sobolev extensions of H\"older continuous and characteristic functions on metric spaces},
Canad. J. Math. 59 (2007), no. 6, 1135--1153. 

\bibitem{BS}J. Bj\"orn and N. Shanmugalingam,
\emph{Poincar\'e inequalities, uniform domains and extension properties for Newton-Sobolev functions in metric spaces},
J. Math. Anal. Appl. 332 (2007), no. 1, 190--208.

\bibitem{EvGa}L. C. Evans and R. F. Gariepy,
\emph{Measure theory and fine properties of functions},
Studies in Advanced Mathematics, CRC Press, Boca Raton, FL, 1992. viii+268 pp.

\bibitem{Fed}H. Federer,
\emph{Geometric measure theory},
Die Grundlehren der mathematischen Wissenschaften, Band 153 Springer-Verlag New York Inc., New York 1969 xiv+676 pp. 

\bibitem{Gi} E. Giusti,
\emph{Minimal surfaces and functions of bounded variation},
Monographs in Mathematics, 80. Birkh\"auser Verlag, Basel, 1984. xii+240 pp.

\bibitem{Hj}P. Haj\l{}asz,
\emph{Sobolev spaces on metric-measure spaces},
Heat kernels and analysis on manifolds, graphs, and metric
spaces (Paris, 2002), 173--218. Contemp. Math., 338, Amer. Math. Soc., Providence, RI, 2003.

\bibitem{HKLL}H. Hakkarainen, J. Kinnunen, P. Lahti, and P. Lehtel\"a,
\emph{Relaxation and integral representation for functionals of linear growth on metric measure spaces},
preprint 2014.

\bibitem{HKT}P. Haj\l{}asz, P. Koskela, and H. Tuominen,
\emph{Measure density and extendability of Sobolev functions},
Rev. Mat. Iberoam. 24 (2008), no. 2, 645--669. 

\bibitem{J}P. W. Jones,
\emph{Quasiconformal mappings and extendability of functions in Sobolev spaces},
Acta Math. 147 (1981), no. 1-2, 71--88.

\bibitem{KKST}J. Kinnunen, R. Korte, N. Shanmugalingam, and H. Tuominen,
\emph{Pointwise properties of functions of bounded variation in metric spaces},
Rev. Mat. Complut. 27 (2014), no. 1, 41--67. 

\bibitem{Mir}M.~Miranda, Jr.,
\emph{Functions of bounded variation on ``good'' metric spaces},
J. Math. Pures Appl. (9)  82  (2003),  no. 8, 975--1004. 























\end{thebibliography}
\end{document}